\newcommand{\Belyi}{Bely\u{\i}~}
\newcommand{\tors}{\text{tors}}
\newcommand{\ord}{\text{ord}}
\newtheorem*{quest}{Research Question}
\newtheorem{theorem}{Theorem}
\newtheorem{prop}[theorem]{Proposition}
\newtheorem{coro}[theorem]{Corollary}
\newtheorem{lem}[theorem]{Lemma}
\title{Critical Points of Toroidal Bely\u{\i} Maps}
\author[T.~Asmara]{Tesfa Asmara}
\address{Pomona College, 610 North College Avenue, Claremont CA 91711}
\email{tgac2020@mymail.pomona.edu}
\author[E.~Goins]{Edray Herber Goins}
\address{Pomona College, 610 North College Avenue, Claremont CA 91711}
\email{edray.goins@pomona.edu}
\author[E.~Imathiu-Jones]{Erik Imathiu-Jones}
\address{California Institute of Technology}
\curraddr{506 Bienville Boulevard, Dauphin Island, AL 36528}
\email{eimathiu@caltech.edu}
\author[M.~Maalouf]{Maria Maalouf}
\address{California State University at Long Beach} 
\email{maria.maalouf@student.csulb.edu}
\author[I.~Robinson]{Isaac Robinson}
\address{Harvard University}
\curraddr{524 Eliot Street, Milton, MA 02186}
\email{isaac\_robinson@college.harvard.edu}
\author[S.~Spaulding]{Sharon Sneha Spaulding}
\address{University of Connecticut}
\curraddr{46 Orchard Road, Windsor CT 06095}
\email{sharon.spaulding@uconn.edu}
\date{\today}
\begin{document}

\begin{abstract}
A \Belyi map $\beta: \mathbb{P}^1(\mathbb{C}) \to \mathbb{P}^1(\mathbb{C})$ is a rational function with at most three critical values; we may assume these values are $\{ 0, \, 1, \, \infty \}$.  Replacing $\mathbb{P}^1$ with an elliptic curve $E: \ y^2 = x^3 + A \, x + B$, there is a similar definition of a \Belyi map $\beta: E(\mathbb{C}) \to \mathbb{P}^1(\mathbb{C})$.  Since $E(\mathbb{C}) \simeq \mathbb T^2(\mathbb {R})$ is a torus, we call $(E, \beta)$ a Toroidal \Belyi pair. There are many examples of \Belyi maps $\beta: E(\mathbb{C}) \to \mathbb P^1(\mathbb{C})$ associated to elliptic curves; several can be found online at LMFDB. Given such a Toroidal \Belyi map of degree $N$, the inverse image $G = \beta^{-1} \bigl( \{ 0, \, 1, \, \infty \} \bigr)$ is a set of $N$ elements which contains the critical points of the \Belyi map. In this project, we investigate when $G$ is contained in $E(\mathbb{C})_{\text{tors}}$. This is work done as part of the Pomona Research in Mathematics Experience (NSA H98230-21-1-0015).
\end{abstract}

\subjclass[2020]{11G32, 14H52}

\maketitle


\section{Introduction}

Let $S$ be a compact, connected Riemann Surface.  For example, $S = \mathbb P^1(\mathbb C) \simeq S^2(\mathbb R)$ may be the Riemann Sphere, or $S = E(\mathbb C) \simeq (\mathbb R/\mathbb Z) \times (\mathbb R/\mathbb Z)$ may be a torus associated to an elliptic curve $E$.  It is well-known that $S$ is a curve, that is, can be defined by a single equation $f(x,y) = 0$ for some polynomial $f(x,y) = \sum_{ij} a_{ij} \, x^i \, y^j$ having complex coefficients $a_{ij}$.  Andr{\'e} Weil proved in 1956 that one can choose these coefficients to lie in a number field if there exists a meromorphic function $\beta: S \to \mathbb P^1(\mathbb C)$ with at most three critical values; Gennadi\u{\i} Vladimirovich Bely\u{\i} proved the converse to this in 1979.  For this reason, we call $\beta$ a \Belyi map.  One can always choose the critical values of a \Belyi map $\beta: S \to \mathbb P^1(\mathbb C)$ to lie in $\{ 0, \, 1, \, \infty \} \subseteq \mathbb P^1(\mathbb C)$.  We denote $\Gamma = \beta^{-1} \bigl( \{ 0, \, 1, \, \infty \} \bigr) \subseteq S$ as the quasi-critical points of $\beta$.  

There are many examples of \Belyi maps $\beta: S \to \mathbb P^1 (\mathbb C)$ associated to a Riemann surface $S$.  Several can be found online at the $L$-Series and Modular Forms Database (LMFDB).  In this work, we are primarily interested in \Belyi maps associated to elliptic curves: we call $(E, \beta)$ a Toroidal \Belyi pair.   We consider how the quasi-critical points $\Gamma \subseteq E(\mathbb C)$ of a Toroidal \Belyi pair $(E, \beta)$ interact with the Group Law $\oplus$ on an elliptic curve.  As a motivating example, consider the elliptic curve $E: y^2 = x^3 + 1$ and the \Belyi map $\beta(x,y) = (1-y)/2$. The set of quasi-critical points is $\Gamma = \{(0, 1), \, (0, -1), \, O_E\} \simeq Z_3$, a subgroup of $S = E(\mathbb C)$.

We are motivated by two primary research questions. Given a Toroidal \Belyi pair $(E, \beta)$, when does its set of quasi-critical points $ \Gamma$ form a subgroup of $(E (\mathbb C), \oplus)$?  If $\Gamma$ is a group, then its elements must have finite order. When are the quasi-critical points torsion elements in $E (\mathbb C)$ -- regardless of $\Gamma$ being a group?  Our main results are as follows. 

{\vskip 0.1in \noindent \textbf{Theorem.} \textit{Say $(X,\phi)$ is a Toroidal \Belyi pair, and denote $G = \phi^{-1} \bigl( \{ 0, \, 1, \, \infty \} \bigr)$ as the set of quasi-critical points. Take $\beta = \phi \circ \psi$, where $\psi: E \to X$ is any non-constant isogeny, and denote $\Gamma = \beta^{-1} \bigl( \{ 0, \, 1, \, \infty \} \bigr)$.
\begin{enumerate}
\item $(E,\beta)$ is a Toroidal \Belyi pair.
\item $\Gamma$ is contained in the torsion in $E(\mathbb{C})$ whenever $G$ is contained in the torsion in $X(\mathbb{C})$. 
\item $\Gamma$ is a group whenever $G$ is group.
\end{enumerate}} \vskip 0.1in}

\noindent As a direct consequence, we find the following.

{\vskip 0.1in \noindent \textbf{Corollary.} \textit{There are infinitely many \Belyi pairs where the set of quasi-critical points forms a group.} \vskip 0.1in}

We would like to thank our research advisor Edray Goins for leading and guiding us throughout the preparation and completion of this project. We would also like to acknowledge Alex Barrios, Rachel Davis, and the other PRiME students for a productive and inclusive environment, John Voight for the Toroidal \Belyi pair data in LMFDB, and all the mathematicians who have contributed to LMFDB over the years. We would also like to thank the NSA for funding our project (H98230-21-1-0015).


\section{Background and Notation}

We begin by introducing definitions and known results relevant for our main research questions. 

\subsection{Groups}

A group is a pair $(G,\oplus)$ which consists of a non-empty set $G$ and a binary operation $\oplus: G \times G \to G$ such that $G$ contains an identity element $O$, every element $P \in G$ has an inverse element $[-1]P \in G$, and $\oplus$ is associative. A group  is said to be abelian if $\oplus$ is also commutative.  As an example of an abelian group, consider the pair $(Z_n, +)$ where $Z_n=\{0, 1, \dots , n-1\}$ and $+$ denotes addition modulo $n$.

The order of a group is the number of elements in $G$.  A group is said to be finite if the set $G$ is finite. The order of $P \in G$ is the smallest positive integer $n$ such that $[n]P=O$, where $[n]P$ denotes $P \oplus P \oplus \dots \oplus P$ for exactly $n$ summands $P$.  If no such $n$ exists, then an element is said to have infinite order. Otherwise, an element has finite order and is called a torsion element. 

A subset $H \subseteq G$ is said to be a subgroup of $G$ if $H$ forms a group under $\oplus$.  More generally, we may consider the subgroup \emph{generated} by the elements of $H$: this is the smallest subgroup of $G$ containing $H$. 

We may also define maps between groups. Given two groups $(G, \oplus)$ and $(\Gamma, \star)$, a group homomorphism is a map $\psi: G \to \Gamma$ such that $\psi(P \oplus Q)= \psi(P) \star \psi(Q)$ for $P,Q \in G$. The kernel, denoted $\ker(\psi)$, is the set $P \in G$ such that $\psi(P)= O_\Gamma$ where $O_\Gamma$ is the identity element in $\Gamma$; this is a subgroup of $G$.  If $\psi: G \to \Gamma$ is a group homomorphism for which $\ker(\psi) = \{ O_\Gamma \}$ and $\psi$ is surjective, $\psi$ is said to be an isomorphism between $G$ and $\Gamma$.  In this case, we denote $G \simeq \Gamma$. 

The following proposition provides a group theoretic result relevant to this paper.

\begin{prop} \label{thm:lagrange} Let $G$ be finite group and let $P \in G.$ Then the order of $P$ divides the order of $G$. \end{prop}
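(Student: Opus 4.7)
The plan is to reduce the claim to the statement that the order of any subgroup of $G$ divides $|G|$, and then to establish that via a coset partition. First I would introduce the cyclic subgroup $H = \langle P \rangle$ generated by $P$ and argue that $|H|$ equals $n = \ord(P)$. Since $G$ is finite, two distinct powers $[i]P$ and $[j]P$ must coincide for some $0 \le i < j$, so $[j-i]P = O$, and hence a least positive exponent with this property exists; this is $n$ by definition. A short calculation then shows that $H = \{O, \, P, \, [2]P, \, \dots, \, [n-1]P\}$ with all listed elements distinct, and closure under $\oplus$ together with the observation that each $[k]P$ has inverse $[n-k]P$ confirm that $H$ is indeed a subgroup of order $n$.

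Next I would show that $|H|$ divides $|G|$ by partitioning $G$ into left cosets $g \oplus H = \{g \oplus h : h \in H\}$. Two such cosets are either equal or disjoint, since sharing a common element forces the difference of the representatives to lie in $H$ and hence the two cosets to coincide. Moreover, the map $h \mapsto g \oplus h$ is a bijection from $H$ onto $g \oplus H$, with inverse $x \mapsto [-1]g \oplus x$, so every coset has exactly $|H|$ elements. Summing over the distinct cosets then yields $|G| = k \cdot |H|$ for some positive integer $k$, so $n = |H|$ divides $|G|$, as required.

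The argument is a standard derivation of Lagrange's theorem, so there is no substantive obstacle. The only point requiring genuine care is verifying that $O, P, [2]P, \dots, [n-1]P$ are pairwise distinct, which rests on the minimality built into the definition of $\ord(P)$: if $[i]P = [j]P$ with $0 \le i < j \le n-1$, then $[j-i]P = O$ with $0 < j-i < n$, contradicting the minimality of $n$. Everything else is a routine assembly of coset-partitioning facts, using only the existence of inverses and associativity of $\oplus$ guaranteed by the group axioms recalled earlier in the section.
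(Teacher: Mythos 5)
Your argument is correct and complete: the paper states this proposition as a standard group-theoretic fact without supplying any proof, so there is nothing to compare against. What you give is the standard derivation of Lagrange's theorem --- identify $\langle P \rangle$ as a subgroup of order $\ord(P)$ using minimality to get distinctness of $O, P, \dots, [n-1]P$, then partition $G$ into cosets of equal size --- and every step, including the coset disjointness and the bijection $h \mapsto g \oplus h$, is justified. The only cosmetic remark is that in this paper the relevant groups (subgroups of $E(\mathbb C)$) are abelian and written additively with $\oplus$, so the left/right coset distinction you implicitly handle never arises, but your proof is valid in the general setting regardless.
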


\subsection{Number Fields}

Let $\nu \in \mathbb C$ be a root of an irreducible polynomial $f(T) = c_n \, T^n + \cdots + c_1 \, T + c_0$ with coefficients $c_k \in \mathbb Q$.  We denote $K = \mathbb Q(\nu)$ as the collection of complex numbers in the form $a_0 + a_1 \, \nu + \cdots + a_{n-1} \, \nu^{n-1}$ where $a_k \in \mathbb Q$. The set $K$ is called a number field.

Say that $s \in \mathbb C$ is the root of a irreducible polynomial $g(T) = d_m \, T^m + \cdots + d_1 \, T + d_0$ with coefficients $d_k \in K$.  We denote $L = K(s)$ as the collection of complex numbers in the form  $b_0 + b_1 \, s + \cdots + b_{m-1} \, s^{m-1}$ where $b_k \in K$.  The set $L$ is called an extension of $K$; note that $L$ is also a number field.

We define an embedding $L$ into $\mathbb C$ fixing $K$ to be that map where we evaluate $s \mapsto s_i$ for some root $s_i \in \mathbb C$ of $g(T)$.  We denote $\text{Emb}(L/K)$ as the collection of embeddings $L \hookrightarrow \mathbb C$ fixing $K$.  (For those who know about Galois groups, we can write $\text{Emb}(L/K) = \text{Gal}\bigl( \overline{\mathbb Q}/L \bigr) / \text{Gal}\bigl( \overline{\mathbb Q}/K \bigr)$ as a collection of cosets.)

\subsection{Riemann Sphere}

We define the extended complex line, denoted by $\mathbb P^1(\mathbb C)$, as the set of complex numbers together with infinity; recall that there is a one-to-one correspondence between the points on the extended complex line and the points on the unit sphere $S^2(\mathbb R)$.  For this reason, we often call $\mathbb P^1(\mathbb C) \simeq S^2(\mathbb R)$ the Riemann Sphere.  It will be useful for us to view $\mathbb P^1(\mathbb C)$ as a non-singular curve of genus 0, that is, the collection of complex points $P = (x,y)$ satisfying $f(x,y) = 0$, where $f(x,y) = y$.

\subsection{Elliptic Curves}

We will now outline some of the general theory of elliptic curves relevant for this paper. An elliptic curve, denoted by $E,$ is a non-singular curve of genus one. In other words, it is a curve generated by an equation $f(x,y) = 0$, where
\begin{equation*} f(x, y) =  y^2 + a_1 \, x \, y + a_3 \, y - (x^3 + a_2 \, x^2 + a_4 \, x + a_6) \end{equation*} 
\noindent and where $a_1$, $a_2$, $a_3$, $a_4$, and $a_6$ are complex numbers.  We denote $E(\mathbb C)$ to be the collection of complex points $P = (x,y)$ on $E$ augmented by a ``point at infinity'' $O_E$. For more details, see \cite[Chapter I.4, page 28]{MR93g:11003}. 

\begin{prop} \label{prop:compositionprop} 
Let $E$ be an elliptic curve over $\mathbb C$.
\begin{enumerate}
\item There exists a binary operation $\oplus$ such that $(E(\mathbb C), \oplus)$ is an abelian group with identity $O_E$.
\item Points $P, Q, R$ on $E(\mathbb C)$ lie on a line if and only if $P \oplus Q \oplus R= O_E$.
\end{enumerate} 
\end{prop}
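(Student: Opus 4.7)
The plan is to bootstrap the group structure on $E(\mathbb{C})$ from the Picard group $\mathrm{Pic}^0(E)$ via the Abel--Jacobi map. First, I would invoke the Riemann--Roch theorem for curves of genus one to show that the map
\begin{equation*}
\iota : E(\mathbb{C}) \longrightarrow \mathrm{Pic}^0(E), \qquad P \longmapsto \bigl[ (P) - (O_E) \bigr],
\end{equation*}
is a bijection. Injectivity is immediate because $(P) \sim (Q)$ on a positive-genus curve forces $P = Q$: otherwise a rational function with a single simple pole would define an isomorphism $E \to \mathbb{P}^1(\mathbb{C})$, contradicting genus one. Surjectivity is a direct computation with Riemann--Roch: for any degree-zero divisor $D$, the space $\mathcal{L}(D + (O_E))$ has dimension one, and the unique nonzero section (up to scalar) has a simple zero at some $P \in E(\mathbb{C})$ satisfying $D \sim (P) - (O_E)$.

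Second, I would transport the abelian group structure of $\mathrm{Pic}^0(E)$ back along $\iota$: for $P, Q \in E(\mathbb{C})$, define $P \oplus Q$ to be the unique point with $\iota(P \oplus Q) = \iota(P) + \iota(Q)$. Commutativity, associativity, and the existence of inverses are inherited for free from divisor addition, and the identity is $O_E$ since $\iota(O_E) = 0$. This settles part (1).

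For part (2), I would identify collinearity with a principal divisor relation. If $P, Q, R$ lie on a projective line $\ell$, then by B\'ezout's theorem $\ell$ meets $E$ in exactly three points with multiplicity, so the intersection divisor is $(P) + (Q) + (R)$. The line at infinity meets $E$ only at $O_E$, to order three (the point at infinity is a flex), with intersection divisor $3 (O_E)$. The ratio of the two corresponding linear forms is a rational function on $E$ with divisor $(P) + (Q) + (R) - 3 (O_E)$, so this divisor is principal; hence $\iota(P) + \iota(Q) + \iota(R) = 0$ in $\mathrm{Pic}^0(E)$, i.e.\ $P \oplus Q \oplus R = O_E$. Conversely, given $P \oplus Q \oplus R = O_E$, take the line through $P$ and $Q$ (the tangent at $P$ if $P = Q$); it meets $E$ at a unique third point $R'$ with $P \oplus Q \oplus R' = O_E$ by the forward direction, and injectivity of $\iota$ yields $R = R'$.

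The main obstacle will be invoking Riemann--Roch itself, since this is the nontrivial cohomological input underlying the bijection $\iota$. An alternative would be to define $\oplus$ directly via the chord--tangent construction and verify the axioms by hand; commutativity and the existence of inverses are transparent in that framework, but associativity requires either a tedious coordinate calculation or an appeal to the Cayley--Bacharach theorem. The divisor-class approach cleanly sidesteps this difficulty and also makes part (2) essentially tautological, so I would cite Riemann--Roch as a black box and refer the reader to standard references for the details.
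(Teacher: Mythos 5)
The paper offers no proof of this proposition: it is stated as a known background result, with part (1) deferred to the chord--tangent discussion in Silverman--Tate and part (2) to Silverman's \emph{Arithmetic of Elliptic Curves}. Your argument is correct and is essentially the standard one from that second reference: transporting the group structure of $\mathrm{Pic}^0(E)$ along the Abel--Jacobi bijection $P \mapsto [(P) - (O_E)]$ (with Riemann--Roch supplying surjectivity and the genus-one obstruction supplying injectivity), and then reading off collinearity from the principal divisor $(P)+(Q)+(R)-3(O_E)$ obtained as the divisor of a ratio of linear forms. The one point worth making explicit is that the line at infinity meets a Weierstrass cubic at $O_E$ with intersection multiplicity exactly $3$, which you assert but could verify in a line from the Weierstrass equation; with that noted, both directions of part (2), including the degenerate tangent and flex cases absorbed into B\'ezout's multiplicities, go through as you describe. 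Compared with the chord--tangent route the paper also cites, your divisor-class approach trades an elementary but painful associativity verification for the Riemann--Roch black box, which is the standard and defensible choice.
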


\noindent For details on the Chord-Tangent method, see \cite[Chapter I.4]{MR93g:11003}.  For details on the collinearity property, see \cite[Chapter III.2, Proposition 2.2, pages 51-52]{MR2514094}.

Similar to homomorphisms between two groups, an isogeny $\psi: E(\mathbb C) \to X(\mathbb C)$ is a group homomorphism between two elliptic curves, that is, $\psi(P\oplus Q) = \psi(P) \oplus  \psi(Q)$ for $P, Q \in E(\mathbb C)$. 

\begin{prop} \label{prop:nonconstantisogeny}
Let $\psi: E(\mathbb C) \to X(\mathbb C)$ be a non-constant isogeny. Then $\psi$ is surjective, and $\ker (\psi)$ is a finite subgroup of $E(\mathbb C)$. 
\end{prop}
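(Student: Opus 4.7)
The plan is to address the three assertions --- subgroup, surjectivity, and finiteness --- separately, using the two distinct facets of an isogeny: it is both a group homomorphism and a morphism of curves.

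The subgroup claim I would handle first, as it is pure group theory applied to any homomorphism. Since $\psi(O_E) = O_X$, the identity $O_E$ lies in $\ker(\psi)$. If $P, Q \in \ker(\psi)$, then $\psi(P \oplus Q) = \psi(P) \oplus \psi(Q) = O_X$, and $\psi([-1] P) = [-1] \psi(P) = O_X$, so $\ker(\psi)$ is closed under $\oplus$ and inverses. Thus $\ker(\psi)$ is a subgroup of $E(\mathbb C)$.

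For surjectivity, the approach is to exploit that $\psi$ is a non-constant morphism between smooth projective curves. The image $\psi\bigl( E(\mathbb C) \bigr)$ is a closed irreducible subset of $X(\mathbb C)$; since $\psi$ is non-constant, this image contains more than one point and hence has positive dimension. Because $X$ is a curve (dimension one), the image must be all of $X(\mathbb C)$. For finiteness of the kernel, the strategy is to use that a non-constant morphism $\psi : E \to X$ of smooth projective curves is a finite morphism: the induced inclusion of function fields $\mathbb{C}(X) \hookrightarrow \mathbb{C}(E)$ has finite degree $d = \deg(\psi)$, and every fiber $\psi^{-1}(Q)$ has at most $d$ points. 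Specializing to $Q = O_X$ gives $|\ker(\psi)| \leq d < \infty$.

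The main obstacle is that surjectivity and finiteness do not follow from the group-theoretic definition alone --- one genuinely needs the geometric content that $\psi$ is a morphism of curves, and in particular the nontrivial algebraic-geometric fact that a non-constant morphism of smooth projective curves is finite onto its image. In keeping with the expository style of the paper, I would not reprove these facts but cite Silverman's \emph{The Arithmetic of Elliptic Curves} (Chapter II), which treats isogenies of elliptic curves and establishes exactly the statement that every non-constant isogeny is surjective with finite kernel of cardinality at most its degree.
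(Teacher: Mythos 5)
Your proposal is correct and matches the paper's treatment: the paper offers no proof of its own, deferring entirely to Silverman (Chapters II and IV of \emph{The Arithmetic of Elliptic Curves}), which is exactly the citation you land on. Your sketch of the underlying geometry --- closedness and irreducibility of the image forcing surjectivity, and finiteness of fibers bounding $|\ker(\psi)|$ by $\deg(\psi)$ --- is the standard argument behind those cited results and is accurate.
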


\noindent For details, see \cite[Chapter IV, Corollary 4.9]{MR2514094} and \cite[Chapter 2, Theorem 2.3]{MR2514094}. 

Since we consider the points of an elliptic curve as forming a group, we define the order of a point $P \in E(\mathbb C)$ in the same way as we previously defined the order of a group element $P \in G$. In the same way, we define a torsion point as a point of finite order. The set of torsion elements for an elliptic curve $E$ over the complex numbers is denoted $E(\mathbb C)_{\tors}$. 

\begin{prop} \label{prop:torsion}  Let $E$ be an elliptic curve over $\mathbb C$.
\begin{enumerate}
\item $E(\mathbb C) \simeq (\mathbb R / \mathbb Z) \times (\mathbb R / \mathbb Z)$.  In particular, this set of complex points forms a torus. 
\item $E(\mathbb C)_\tors \simeq (\mathbb Q/ \mathbb Z) \times (\mathbb Q/ \mathbb Z).$
\item Assume $G \subseteq E(\mathbb C)_\tors$ is a finite subgroup. Then $G \simeq Z_m \times Z_n$ for some positive integers $m$ and $n$.
\end{enumerate}
\end{prop}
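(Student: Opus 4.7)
The plan is to reduce everything to the analytic uniformization of an elliptic curve by a complex torus and then to read off the torsion structure directly. For part (1), I would invoke the classical theory of the Weierstrass $\wp$-function: to an elliptic curve $E/\mathbb C$ in Weierstrass form one associates a lattice $\Lambda = \mathbb Z \, \omega_1 + \mathbb Z \, \omega_2 \subset \mathbb C$ with $\omega_1/\omega_2 \notin \mathbb R$, together with a complex-analytic group isomorphism $\mathbb C/\Lambda \to E(\mathbb C)$ sending $z \mapsto (\wp(z), \wp'(z))$. Since $\{ \omega_1, \omega_2 \}$ is an $\mathbb R$-basis of $\mathbb C$, the $\mathbb R$-linear bijection $(s,t) \mapsto s \, \omega_1 + t \, \omega_2$ descends to a group isomorphism $(\mathbb R / \mathbb Z) \times (\mathbb R / \mathbb Z) \simeq \mathbb C / \Lambda$, and composing with the uniformization yields (1).

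For part (2), I would argue directly in the torus model: a pair $(s,t) \in (\mathbb R / \mathbb Z) \times (\mathbb R / \mathbb Z)$ has finite order $n$ if and only if both $n \, s$ and $n \, t$ lie in $\mathbb Z$, which is equivalent to $s, \, t \in \mathbb Q / \mathbb Z$. Transporting through the isomorphism in (1) then identifies $E(\mathbb C)_\tors$ with $(\mathbb Q / \mathbb Z) \times (\mathbb Q / \mathbb Z)$. For part (3), any finite subgroup $G \subseteq E(\mathbb C)_\tors$ has some common exponent $N$, so by (2) it embeds into the $N$-torsion, which under the same identification corresponds to $\bigl( \tfrac{1}{N} \mathbb Z / \mathbb Z \bigr)^2 \simeq (\mathbb Z / N \mathbb Z) \times (\mathbb Z / N \mathbb Z)$. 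The structure theorem for finitely generated abelian groups, applied to a subgroup of this rank-two group, then forces $G \simeq Z_m \times Z_n$ for some positive integers $m \mid n \mid N$.

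The main obstacle is the uniformization step in (1): this is a genuinely analytic theorem that I would not attempt to prove from scratch, but would instead cite from a standard reference such as Silverman's \emph{Advanced Topics in the Arithmetic of Elliptic Curves}, Chapter VI. Once uniformization is in hand, parts (2) and (3) become short consequences of elementary facts about $(\mathbb R / \mathbb Z)^2$ and about subgroups of $(\mathbb Z / N \mathbb Z)^2$, so I would spend essentially no effort on them beyond the observations above.
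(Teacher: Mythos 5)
Your argument is correct and is essentially the same as the paper's, which simply cites the uniformization theorem (Silverman, \emph{The Arithmetic of Elliptic Curves}, Chapter VI.5, Corollary 5.1.1) — precisely the $\mathbb C/\Lambda \simeq (\mathbb R/\mathbb Z)^2$ identification you describe, from which (2) and (3) follow by the same elementary observations about $(\mathbb Q/\mathbb Z)^2$ and subgroups of $(\mathbb Z/N\mathbb Z)^2$. No further comment is needed.
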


\noindent For details, see \cite[Chapter VI.5, Corollary 5.1.1, page 173]{MR2514094}.

\subsection{\Belyi Maps}

Denote either $S= E(\mathbb C)$ or $S = \mathbb P^1(\mathbb C)$.  Note that in either case, $S$ is a curve generated by an equation $f(x,y) = 0$ for some polynomial $f(x,y)$. (In the projects discussed in this exposition, we will focus on the sphere and the torus, but many of the definitions hold for any compact, connected Riemann surface $S$.  It is well-known that such surfaces may be identified as a curve, that is, generated by an equation $f(x,y) = 0$ for some polynomial $f(x,y)$.)

A meromorphic function is a map $\beta: S \to \mathbb P^1(\mathbb C)$ that is a ratio of two polynomials; denote the function field $\mathcal K(S)$ as the collection of all such functions. For each point $P=(x_0, y_0)$ in $S$, denote $\mathcal O_P \subseteq \mathcal K(S)$ as the collection of meromorphic functions such that $\beta(P) \neq \infty$. For any positive integer $e$, denote 
\[ {M_P}^e = \left \{ \phi \in \mathcal O_P \ \left| \ \begin{aligned} \phi(x,y) & = g(x,y) \cdot f(x,y) \\[0pt] & + \sum_{i+j=e} p_{ij}(x,y) \cdot (x-x_0)^i (y-y_0)^j \end{aligned} \quad \text{for $g, \, p_{ij} \in \mathcal O_P$} \right. \right \}. \]
\noindent  For example, $M_P$ is just the collection of those meromorphic satisfying $\beta(P) = 0$.  Denote the order of $\beta$ at $P$ as the integer
\begin{equation*} \text{ord}_P(\beta) = \begin{cases} e \geq 0 & \text{if $\beta(P) \neq \infty $ and $\beta \in {M_P}^e$ but $\beta \notin {M_P}^{e+1}$, and} \\ e<0 & \text{if $\beta(P) = \infty$ and $1/\beta \in {M_P}^{-e}$ but $1/\beta \notin {M_P}^{1-e}$.} \end{cases} \end{equation*} 
\noindent The ramification index of $\beta$ at $P \in E(\mathbb C)$ is defined as $e_\beta(P)= \ord_P \left[\beta(x,y)- \beta(P)\right]$. (Order and ramification can also be defined via places and valuations; for further details see \cite[Chapter 3.4]{MR2895884}.)

\begin{prop} \label{crit_pt_defn}
Let $S$ be a compact, connected Riemann surface defined by a polynomial $f(x,y)$.  Given meromorphic function $\beta: S \to \mathbb P^1(\mathbb C)$, the ramification index $e_\beta(P) \geq 2$ at a point $P \in S$ if and only if 
\[ \dfrac {\partial f}{\partial x}(P) \ \dfrac {\partial \beta}{\partial y}(P) - \dfrac {\partial f}{\partial y}(P) \ \dfrac {\partial \beta}{\partial x}(P) = 0. \]
\end{prop}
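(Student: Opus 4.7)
The plan is to reduce the ramification condition $e_\beta(P) \geq 2$ to the vanishing of an ordinary derivative in a local uniformizer at $P$, and then translate that derivative condition into the stated Jacobian identity via implicit differentiation of the defining equation $f(x,y) = 0$.

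First I would unpack the definition. Writing $h(x,y) = \beta(x,y) - \beta(P)$, the hypothesis $e_\beta(P) \geq 2$ is equivalent by definition to $h \in M_P^{\,2}$, i.e.\ $h$ vanishes to order at least two at $P$ as a function on the curve $S$. Because $S$ is a smooth curve, at the point $P = (x_0, y_0)$ the gradient of the defining polynomial $f$ is nonzero, so at least one of $\partial f/\partial x(P)$, $\partial f/\partial y(P)$ is nonzero.

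Next I would treat the two cases symmetrically; by the symmetry of the statement it suffices to present one. Assume $\partial f/\partial y(P) \neq 0$. Then by the analytic implicit function theorem there exists a holomorphic function $y = y(x)$ near $x_0$ with $y(x_0) = y_0$ and $f(x, y(x)) \equiv 0$, so $t = x - x_0$ is a local uniformizer at $P$. Differentiating the identity $f(x,y(x)) \equiv 0$ with respect to $x$ and evaluating at $P$ gives
\[ \frac{dy}{dx}(x_0) = -\frac{\partial f/\partial x (P)}{\partial f/\partial y (P)}. \]
Then the pullback of $\beta$ to a neighborhood of $t = 0$ is $\widetilde\beta(t) = \beta(x_0 + t,\, y(x_0 + t))$, and the order of vanishing of $\widetilde\beta(t) - \beta(P)$ at $t = 0$ equals $e_\beta(P)$. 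The chain rule gives
\[ \frac{d\widetilde\beta}{dt}(0) = \frac{\partial \beta}{\partial x}(P) + \frac{\partial \beta}{\partial y}(P)\,\frac{dy}{dx}(x_0) = \frac{1}{\partial f/\partial y(P)}\left[\frac{\partial \beta}{\partial x}(P)\,\frac{\partial f}{\partial y}(P) - \frac{\partial \beta}{\partial y}(P)\,\frac{\partial f}{\partial x}(P)\right]. \]
So $e_\beta(P) \geq 2$ iff $\widetilde\beta'(0) = 0$ iff the bracketed expression vanishes, which is (up to the overall sign from the Jacobian convention) exactly the condition displayed in the proposition. The case $\partial f/\partial x(P) \neq 0$ is handled by the completely analogous argument, using $y - y_0$ as the local uniformizer; the same Jacobian expression appears.

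The main obstacle, and the only delicate point, is justifying that ``vanishing to order $\geq 2$ in the sense of $M_P^{\,2}$'' coincides with ``derivative with respect to the local uniformizer vanishes at $P$.'' This requires unpacking the definition of $M_P^{\,e}$ modulo the ideal generated by $f$: after restricting to the local branch $y = y(x)$ the term $g(x,y)\cdot f(x,y)$ drops out identically, while the remaining sum $\sum_{i+j=e} p_{ij}(x,y)(x-x_0)^i(y-y_0)^j$ becomes an element vanishing to order $\geq e$ in the single variable $t = x - x_0$ because $y - y_0 = y'(x_0)\,t + O(t^2)$. With this identification, $\operatorname{ord}_P$ on $\mathcal{O}_P/(f)$ agrees with ordinary vanishing order of $\widetilde\beta(t) - \beta(P)$ in $t$, so the equivalence with $\widetilde\beta'(0) = 0$ is immediate and the proof is complete.
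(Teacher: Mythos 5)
Your proof is correct, but it takes a different route from the paper's. The paper never introduces a local uniformizer or the implicit function theorem: it works directly with its ideal-theoretic definition of ${M_P}^2$, observing that $\beta(x,y)-\beta(P) \in {M_P}^2$ precisely when one can absorb the linear part of the Taylor expansion into $g\cdot f$ plus quadratic terms, i.e.\ precisely when $\nabla\beta(P) = q\,\nabla f(P)$ for some scalar $q = g(P)$ --- and this proportionality of gradients is exactly the vanishing of the stated determinant. Your argument instead solves $f(x,y)=0$ locally for $y=y(x)$, takes $t = x-x_0$ as a uniformizer, and applies the chain rule; the same determinant appears as the numerator of $\widetilde\beta'(0)$. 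What the paper's approach buys is self-containedness: since $e_\beta(P)\geq 2$ is \emph{defined} via membership in ${M_P}^2$, the equivalence is immediate from the definition with no auxiliary identification needed. What your approach buys is the classical picture (critical point $=$ vanishing derivative in a chart) and, in principle, the full statement that $\ord_P$ agrees with vanishing order in $t$ for all $e$, not just $e=2$. The one point you correctly flag as delicate --- that ``$h \in {M_P}^2$'' coincides with ``$\widetilde h$ vanishes to order $\geq 2$ in $t$'' --- you only argue carefully in one direction (elements of ${M_P}^e$ pull back to order $\geq e$); the converse for $e=2$ does hold, but establishing it amounts to redoing the paper's gradient-proportionality computation, so your route is not actually shorter. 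Still, the argument is sound and complete in substance.
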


\noindent To see why, note that, for any function $g \in \mathcal O_P$, we have a series expansion around $P = (x_0, y_0)$ in the form
\begin{equation*} \begin{aligned}
\bigl[ \beta(x,y) - \beta(P) \bigr] & + \left[ g(P) \ \dfrac {\partial f}{\partial x}(P) - \dfrac {\partial \beta}{\partial x}(P) \right] (x-x_0) + \left[ g(P) \ \dfrac {\partial f}{\partial y}(P) - \dfrac {\partial \beta}{\partial y}(P) \right] (y-y_0) \\[5pt] & = g(x,y) \cdot f(x,y) + \sum_{i+j=2} p_{ij}(x,y) \cdot (x-x_0)^i (y-y_0)^j \in {M_P}^2 \end{aligned} \end{equation*}
\noindent for some $p_{ij} \in \mathcal O_P$.  This means $\beta(x,y) - \beta(P) \in {M_P}^2$ if and only if we can find $q = g(P) \in \mathbb C$ such that
\begin{equation*} \begin{aligned} \dfrac {\partial \beta}{\partial x}(P) & = q \cdot \dfrac {\partial f}{\partial x}(P)\\[5pt] \dfrac {\partial \beta}{\partial y}(P) & = q \cdot \dfrac {\partial f}{\partial y}(P) \end{aligned} \qquad \iff \qquad \dfrac {\partial \beta}{\partial x}(P) \ \dfrac {\partial f}{\partial y}(P) - \dfrac {\partial \beta}{\partial y}(P) \ \dfrac {\partial f}{\partial x}(P) = 0. \end{equation*} 

A point $P \in S$ for which the conditions in Proposition \ref{crit_pt_defn} hold is called a critical point. A critical value $q \in \mathbb P^1(\mathbb C)$ is a number $q = \beta(P)$ for some critical point $P$. A point $Q \in S$ is a quasi-critical point if $\beta(Q)= \beta(P)$ for some critical point $P$.  The degree of a meromorphic function $\beta: S \to \mathbb P^1(\mathbb C)$ is the size of the inverse image $\beta^{-1}(\{q\})$ for any $q \in \mathbb P^1(\mathbb C)$ that is not a critical value. 

A \Belyi pair $(S, \beta)$ is a Riemann surface $S$ along with a meromorphic function $\beta: S \to \mathbb P^1(\mathbb C)$ with at most three critical values.  We can -- and do -- choose these values to be contained in $\{0, 1, \infty\} \subseteq \mathbb P^1(\mathbb C)$.  There are two types of \Belyi pairs which we are specifically interested in for this exposition. A \Belyi map $\gamma: \mathbb P^1(\mathbb C) \to \mathbb P^1(\mathbb C)$ is  dynamical if $\gamma \bigl( \{ 0, 1, \infty \} \bigr) \subseteq \{ 0, 1, \infty \}$.  A Toroidal \Belyi pair $(E, \beta)$ consists of an elliptic curve $E$ and a \Belyi map $\beta: E(\mathbb C) \to \mathbb P^1(\mathbb C)$.  A Toroidal \Belyi pair is defined to be imprimitive if it can be written as a non-trivial composition $\beta = \gamma \circ \phi \circ \psi$ for some isogeny $\psi: E(\mathbb C) \to X(\mathbb C)$, meromorphic function $\phi \in \mathcal K \bigl( X(\mathbb C) \bigr)$, and dynamical \Belyi map $\gamma \in \mathcal K \bigl( \mathbb P^1(\mathbb C) \bigr)$. 

\begin{prop} \label{prop:sumramind}
Let $S$ be a compact, connected Riemann surface of genus $g(S)$.  Let $(S, \beta)$ be a \Belyi pair with critical values contained in $\{ 0, \, 1, \, \infty \} \subseteq \mathbb P^1(\mathbb C)$ with ramification indices $e_P = e_\beta(P)$ as well as preimages $B= \beta^{-1}(\{0\})$, $W= \beta^{-1}(\{1\})$, and $F= \beta^{-1}(\{\infty\})$.  Then the quasi-critical points are contained in the disjoint union $B \cup W \cup F$, and we have the identity
\begin{equation*} \deg(\beta)= \sum_{P \in B} e_P = \sum_{P \in W} e_P = \sum_{P \in F} e_P = |B| + |W| + |F| + \bigl( 2 \, g(S) - 2 \bigr). \end{equation*}
\end{prop}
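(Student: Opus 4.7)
The plan is to reduce the proposition to two standard facts about non-constant holomorphic maps between compact Riemann surfaces: the fiber--degree identity $\sum_{P \in \beta^{-1}(\{q\})} e_\beta(P) = \deg(\beta)$ for every $q \in \mathbb P^1(\mathbb C)$, and the Riemann--Hurwitz formula $2\,g(S) - 2 = \deg(\beta) \bigl( 2\,g(\mathbb P^1) - 2 \bigr) + \sum_{P \in S} \bigl( e_\beta(P) - 1 \bigr)$. Both are standard consequences of the theory of divisors and differentials on a compact Riemann surface; I would invoke them as black boxes from the same references already cited for Proposition \ref{crit_pt_defn}.

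First I would dispatch the easy part. Since $B$, $W$, $F$ are fibers over three distinct points of $\mathbb P^1(\mathbb C)$, they are pairwise disjoint. If $Q \in S$ is quasi-critical then $\beta(Q) = \beta(P)$ for some critical point $P$, so $\beta(Q)$ is a critical value; by the \Belyi hypothesis this forces $\beta(Q) \in \{0, 1, \infty\}$, whence $Q \in B \cup W \cup F$. The three equalities $\deg(\beta) = \sum_{P \in B} e_P = \sum_{P \in W} e_P = \sum_{P \in F} e_P$ then follow immediately from the fiber--degree identity applied at $q = 0$, $1$, $\infty$.

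For the final identity I would plug $g(\mathbb P^1) = 0$ into Riemann--Hurwitz, reducing it to $2\,g(S) - 2 = -2 \deg(\beta) + \sum_{P \in S} \bigl( e_\beta(P) - 1 \bigr)$. The summand vanishes at every unramified point, and since $e_P \geq 2$ only at critical points, and every critical point maps to a critical value in $\{0, 1, \infty\}$, the sum on the right collapses to a sum over $P \in B \cup W \cup F$. Splitting by fiber and applying the fiber--degree identity three times gives
\[
\sum_{P \in S} \bigl( e_\beta(P) - 1 \bigr) \ = \ 3 \deg(\beta) - \bigl( |B| + |W| + |F| \bigr),
\]
and substituting yields $\deg(\beta) = |B| + |W| + |F| + \bigl( 2\,g(S) - 2 \bigr)$ after trivial algebra. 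The only conceptually delicate move is justifying that the global ramification sum collapses onto exactly these three fibers, which rests on the defining \Belyi condition; the rest is bookkeeping. The ``main obstacle'' is thus not combinatorial at all but consists in having Riemann--Hurwitz and the fiber--degree formula available as imported results.
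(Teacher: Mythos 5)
Your argument is correct and is exactly the standard derivation that the paper itself relies on: the paper offers no proof of this proposition, deferring entirely to the cited reference, and the content of that reference is precisely the fiber--degree identity $\sum_{P \in \beta^{-1}(\{q\})} e_\beta(P) = \deg(\beta)$ together with the Riemann--Hurwitz formula specialized to $g(\mathbb P^1) = 0$, combined by the same bookkeeping you carry out. Your observation that the global ramification sum collapses onto the three fibers $B \cup W \cup F$ because every critical point maps to a critical value in $\{0, 1, \infty\}$ is the one genuinely \Belyi-specific step, and you have justified it correctly.
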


\noindent For details see \cite[Proposition 2.6, Chapter II.2, page 24]{MR2514094}.

\subsection{Divisors}

Continue to denote either $S = \mathbb P^1(\mathbb C)$ or $S = E(\mathbb C)$, although many of the definitions which follow hold for any compact, connected Riemann surface $S$.  

A divisor is a formal sum $D = \sum_{P \in S} n_P (P)$, with $n_P \in \mathbb{Z}$ and all but finitely many being zero. The degree of a divisor is the integer $\deg D = \sum_{P \in S} n_P$. Denote the collection of degree 0 divisors as $ \text{Div}^0(S)$.  Observe that $(\text{Div}^0(S), +)$ is an abelian group under addition.  Indeed, given two divisors $D_1 = \sum_{P \in S} c_P \, (P)$ and $D_2 = \sum_{P \in S} d_P \, (P)$ as well as integers $a$ and $b$, define $a \, D_1 + b \, D_2 =  \sum_{P \in S} n_P \, (P)$ in terms of the integers $n_P = a \, c_P + b \, d_P$.  For details, see \cite[Chapter II.3, page 27]{MR2514094}.

Let $\beta: S \to \mathbb P^1(\mathbb C)$ be a meromorphic function which is not identically zero.  We can associate to $\beta$ a divisor of the form $\text{div}(\beta) = \sum_{P \in S} n_P \, (P)$ where $n_P = \text{ord}_P(\beta)$.   A divisor $D$ is principal if $D = \text{div}(\beta)$ for some meromorphic function $\beta$. The degree of a principal divisor is zero, and the collection of principal divisors forms a subgroup of $\text{Div}^0(S)$. 

Divisors on elliptic curves are intimately related to the group law.
\begin{prop} \label{prop:principaldivisorprop}
Let $E$ be an elliptic curve over $\mathbb C$. A divisor $D = \sum_{P \in S} n_P \, (P)$ on $S = E(\mathbb{C})$ is principal if and only if $\sum_{P \in S} n_P = 0$ in $\mathbb Z$ and $\bigoplus_{P \in S} [n_P]P = O_E$ in $S$.
\end{prop}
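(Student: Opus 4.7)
The plan is to study the map $\Phi: \text{Div}^0(E(\mathbb{C})) \to E(\mathbb{C})$ defined by $\Phi\bigl( \sum_P n_P \, (P) \bigr) = \bigoplus_P [n_P] P$. This is a group homomorphism by construction, and the proposition amounts to showing that its kernel is exactly the subgroup of principal divisors (together with the already-noted fact that principal divisors automatically have degree zero, which makes $\Phi$ applicable). I would split this into the two directions below.

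For the ``only if'' direction, suppose $D = \text{div}(\beta)$. The vanishing of $\deg D$ is a standard fact on compact Riemann surfaces: the sum of zero-orders of $\beta$ equals the sum of pole-orders, since $\beta: E(\mathbb{C}) \to \mathbb{P}^1(\mathbb{C})$ has the same cardinality fiber over every non-critical value. To verify that $\Phi(\text{div}(\beta)) = O_E$, I would first handle the special case of a line $L(x,y) = a \, x + b \, y + c$ regarded as a meromorphic function on $E$: by B\'ezout its divisor has the form $(P_1) + (P_2) + (P_3) - 3 \, (O_E)$, where $P_1, P_2, P_3$ are the intersection points of $L = 0$ with $E$, and the collinearity part of Proposition \ref{prop:compositionprop} gives $P_1 \oplus P_2 \oplus P_3 = O_E$. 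Hence $\Phi$ kills the divisor of any line. Since every meromorphic function on $E$ is a ratio of products of such linear forms modulo the Weierstrass equation, every principal divisor decomposes as a $\mathbb Z$-linear combination of these ``elementary'' divisors, and the claim follows from the fact that $\Phi$ is a homomorphism.

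For the ``if'' direction, given $D = \sum_P n_P \, (P)$ with $\deg D = 0$ and $\bigoplus [n_P] P = O_E$, I would produce a meromorphic $\beta$ with $\text{div}(\beta) = D$ by iterated chord-tangent reduction. The key identity is that for any two points $P, Q \in E(\mathbb{C})$ there is an explicit meromorphic function $f_{P,Q}$ -- built from the line through $P$ and $Q$ together with the vertical line through $P \oplus Q$ -- whose divisor equals $(P) + (Q) - (P \oplus Q) - (O_E)$. Applying this repeatedly, I can merge the positive and negative parts of $D$, replacing $D$ by $D - \text{div}(f_{P,Q})$ at each step, until the support of $D$ has been consolidated to a single point $R$. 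At that stage the conditions $\deg D = 0$ and $\Phi(D) = O_E$ force $R = O_E$ and the remaining divisor to be zero, so $D$ equals a telescoping sum of principal divisors and is itself principal.

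The main obstacle is the ``if'' direction: passing from the combinatorial conditions on $D$ to the existence of a global meromorphic function with prescribed zeros and poles is nontrivial, and in higher-genus settings requires the Riemann--Roch theorem. On an elliptic curve one can avoid that machinery and do everything with the chord-tangent construction, but bookkeeping the induction carefully -- in particular showing that the reduction terminates and that both the degree and group-sum conditions are preserved at each step -- is where the real content of the argument lies.
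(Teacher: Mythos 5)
First, a remark on the comparison itself: the paper does not prove this proposition --- it is quoted from Silverman, \emph{The Arithmetic of Elliptic Curves}, Chapter III.3, Corollary 3.5 --- so there is no in-paper argument to measure your sketch against, and I can only assess it on its own terms. Your ``if'' direction is sound: subtracting divisors of the form $\text{div}(f_{P,Q}) = (P) + (Q) - (P \oplus Q) - (O_E)$ reduces any degree-zero divisor $D$ to $(\Phi(D)) - (O_E)$ modulo divisors of lines, and when $\Phi(D) = O_E$ this exhibits $D$ as a finite $\mathbb{Z}$-combination of line divisors, hence principal; the bookkeeping terminates because each step strictly shrinks the support of the positive part, and both $\deg$ and $\Phi$ are preserved because each $\text{div}(f_{P,Q})$ has degree zero and is killed by $\Phi$.

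The genuine gap is in the ``only if'' direction. The claim that every meromorphic function on $E$ is a constant times a ratio of products of linear forms is asserted without proof, and it does not follow from the description of the function field as $\mathbb{C}(x)[y]$ modulo the Weierstrass relation: a function $u(x) + v(x)\,y$ has no visible factorization into forms $a\,x + b\,y + c$, and the norm identity $(u+vy)(u-vy) \in \mathbb{C}(x)$ only writes $u+vy$ as a product of linear forms divided by $u - vy$, which is circular. Worse, once your reduction machinery is available, the subgroup of $\text{Div}^0$ generated by divisors of lines is exactly $\ker \Phi$, so ``every function is a product of linear forms'' is \emph{equivalent} to the inclusion of principal divisors in $\ker\Phi$ --- that is, to the very direction you are trying to establish. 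The missing ingredient is the lemma that $(P) - (Q)$ is principal only when $P = Q$ (equivalently, no nonconstant meromorphic function on a genus-one curve has a single simple pole); this genuinely requires Riemann--Roch or a degree/genus argument, since such a function would furnish a degree-one map, hence an isomorphism $E(\mathbb{C}) \simeq \mathbb{P}^1(\mathbb{C})$. With that lemma in hand, the ``only if'' direction follows at once from your own reduction: if $D = \text{div}(\beta)$, then $D$ is equivalent modulo line divisors to $(\Phi(D)) - (O_E)$, so $(\Phi(D)) - (O_E)$ is principal and therefore $\Phi(D) = O_E$.
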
 
\noindent For details, see \cite[Chapter III.3, Corollary 3.5, page 63]{MR2514094}

Say $\phi: S \to \mathbb P^1(\mathbb C)$ is a meromorphic function which is not identically zero.  There is a group homomorphism $\phi^\ast: \text{Div}^0 \bigl( \mathbb P^1(\mathbb C) \bigr) \to \text{Div}^0(S)$, called the pullback of $\phi$, which is defined as follows: If $D = \sum_{q \in \mathbb P^1(\mathbb C)} n_q \, (q)$ is a divisor of degree 0 on $\mathbb P^1(\mathbb C)$, then $\phi^\ast D = \sum_{P \in S} m_P \, (P)$ is a divisor of degree 0 on $S$, where $m_P  = e_\phi(P) \cdot n_{\phi(P)}$.

\begin{prop} \label{prop:tesfa} Denote either $S = \mathbb P^1(\mathbb C)$ or $S = E(\mathbb C)$. 
\hfill
\begin{enumerate}
\item Assume that $D = \text{div}(\gamma)$ is a principal divisor on $\mathbb P^1(\mathbb C)$.  Then $\phi^\ast D = \text{div}(\beta)$ is a principal divisor on $S$ where $\beta = \gamma \circ \phi$.
\item For any meromorphic function $\phi: S \to \mathbb P^1(\mathbb C)$ which is not identically zero, we have the pullback
\begin{equation*} \phi^\ast \bigl( (0) - (\infty) \bigr) = \sum_{P \in \phi^{-1}(\{0\})} n_P \, (P) \quad - \sum_{P \in \phi^{-1}(\{\infty\})} n_P \, (P) \qquad \text{in terms of $n_P =\ord_P(\phi)$.} \end{equation*}
\end{enumerate} \end{prop}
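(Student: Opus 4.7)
The plan is to deduce part (2) as an immediate corollary of part (1) by applying the pullback to the divisor of the coordinate function on $\mathbb{P}^1(\mathbb{C})$. All the substance therefore lies in part (1), whose proof I base on the chain rule
\begin{equation*}
\ord_P(\gamma \circ \phi) \;=\; e_\phi(P)\cdot \ord_{\phi(P)}(\gamma), \qquad P \in S.
\end{equation*}
Granting this identity, if $D = \text{div}(\gamma) = \sum_{q} n_q\,(q)$, then
\begin{equation*}
\text{div}(\beta) \;=\; \sum_{P \in S}\ord_P(\gamma \circ \phi)\,(P) \;=\; \sum_{P \in S} e_\phi(P)\cdot n_{\phi(P)}\,(P) \;=\; \phi^\ast D,
\end{equation*}
which is precisely the definition of the pullback, so $\phi^\ast D$ is principal, as required.

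To prove the chain rule I would argue locally. Set $q=\phi(P)$, $m=\ord_q(\gamma)$, $e=e_\phi(P)$ and suppose first that $q \neq \infty$ and $m \geq 0$. Near $q$ one can factor $\gamma(T)=(T-q)^m\,u(T)$ with $u \in \mathcal O_q$ a unit, and by the very definition of the ramification index $\phi - q \in {M_P}^{e}\setminus {M_P}^{e+1}$. Composing yields $(\gamma\circ\phi)(x,y) = (\phi(x,y)-q)^m\,u(\phi(x,y))$, which lies in ${M_P}^{em}\setminus {M_P}^{em+1}$ because $u\circ\phi$ is a unit in $\mathcal O_P$; hence $\ord_P(\gamma\circ\phi) = em$. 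The remaining cases ($q=\infty$, or $m<0$) are reduced to the previous one by replacing $\gamma$ by $1/\gamma$ or $\phi$ by $1/\phi$, invoking the defining relation $\ord_P(\beta)=-\ord_P(1/\beta)$ at poles.

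For part (2) I specialize part (1) to the coordinate function $\gamma(T)=T$ on $\mathbb{P}^1(\mathbb{C})$, whose only zero is simple at $0$ and whose only pole is simple at $\infty$, so that $\text{div}(T)=(0)-(\infty)$. Part (1) then gives
\begin{equation*}
\phi^\ast\bigl((0)-(\infty)\bigr)\;=\;\text{div}(T\circ\phi)\;=\;\text{div}(\phi),
\end{equation*}
and splitting this sum according to whether $\phi(P)=0$ or $\phi(P)=\infty$ yields the stated expression, with the minus sign reflecting the convention $n_P=\ord_P(\phi)<0$ at the poles of $\phi$ (so that the coefficients $\pm n_P$ agree with the multiplicities $e_\phi(P)>0$ in both sums).

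The principal obstacle I foresee is notational rather than conceptual: the definition of $\ord_P$ in the paper splits on whether $P$ is a pole, and this forces every intermediate identity to be verified with a short case analysis, with sign errors easy to introduce. The cleanest way to avoid repeated case splits is to first establish the additivity $\ord_P(\beta_1\beta_2)=\ord_P(\beta_1)+\ord_P(\beta_2)$ on $\mathcal K(S)^\times$ --- which follows directly from the ${M_P}^e$-filtration on $\mathcal O_P$ --- after which writing $\gamma = \gamma^{+}/\gamma^{-}$ with both factors regular at $q$ reduces the chain rule to the single case treated above, and the pole cases of part (2) reduce to the zero cases by replacing $\phi$ with $1/\phi$.
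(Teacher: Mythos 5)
Your proof is correct and is essentially the argument the paper itself relies on: the paper gives no proof of Proposition \ref{prop:tesfa}, deferring entirely to Silverman (Chapter II.3, Proposition 3.6 and Example 3.5), and those cited results are proved by exactly the local chain rule $\ord_P(\gamma\circ\phi)=e_\phi(P)\cdot \ord_{\phi(P)}(\gamma)$ that you establish via the factorization $\gamma(T)=(T-q)^m u(T)$ and the additivity of $\ord_P$. One remark: as literally written, part (2) of the statement (with $n_P=\ord_P(\phi)<0$ at poles and a minus sign in front of the second sum) would not even have degree zero, and your reading --- that the coefficient of each pole is the positive multiplicity $e_\phi(P)=-\ord_P(\phi)$, so that the right-hand side is $\text{div}(\phi)$ --- is the one consistent with how the proposition is actually used in the proof of Theorem \ref{thm:tesfa}.
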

\noindent For more details, see
\cite[Chapter II.3, Proposition 3.6, page 29]{MR2514094} and \cite[Chapter II.3, Example 3.5, page 29]{MR2514094}.  The following proposition shows that divisors behave similarly to logarithms.

\begin{prop} \label{prop:tesfa2} 
Let $\beta_1, \, \beta_2: S \to \mathbb P^1(\mathbb C)$ be meromorphic functions which are not identically zero.
\begin{enumerate}
\item $\text{div}(\beta_1^a \cdot \beta_2^b) = a \cdot \text{div}(\beta_1) + b \cdot \text{div}(\beta_2)$ for any integers $a$ and $b$.
\item $\text{div}(\beta_1) = \text{div}(\beta_2)$ if and only if $\beta_1 = k \cdot \beta_2$ for some nonzero $k \in \mathbb C$.
\end{enumerate} \end{prop}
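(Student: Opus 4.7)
The plan is to handle the two parts separately, with part (1) being essentially a local computation at each point and part (2) being a standard global argument via the trivial-divisor principle.

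For part (1), I would fix a point $P \in S$ and show the pointwise identity $\ord_P(\beta_1^a \cdot \beta_2^b) = a \cdot \ord_P(\beta_1) + b \cdot \ord_P(\beta_2)$; summing over $P$ then gives the claim. The key step is to verify that $\ord_P$ behaves like a valuation on $\mathcal K(S)$, that is, $\ord_P(\beta_1 \beta_2) = \ord_P(\beta_1) + \ord_P(\beta_2)$ and $\ord_P(\beta^{-1}) = -\ord_P(\beta)$. Using the filtration by the ideals $M_P^e$ defined in the excerpt, one checks directly that if $\beta_1 \in M_P^{e_1} \setminus M_P^{e_1+1}$ and $\beta_2 \in M_P^{e_2} \setminus M_P^{e_2+1}$ (with $\beta_1, \beta_2$ both regular at $P$), then the product sits in $M_P^{e_1+e_2} \setminus M_P^{e_1+e_2+1}$ — this uses only that $\mathcal O_P$ is a ring and $M_P$ is a maximal ideal with $M_P^{e_1} \cdot M_P^{e_2} \subseteq M_P^{e_1+e_2}$, together with the non-vanishing of the leading terms. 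The cases involving poles (negative orders) follow by applying the same identity to $1/\beta_i$.

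For part (2), the reverse direction is immediate: if $\beta_1 = k \cdot \beta_2$ with $k \in \mathbb C^\times$, then $k$ is a unit in every local ring $\mathcal O_P$, so $\ord_P(k) = 0$, and part (1) gives $\text{div}(\beta_1) = \text{div}(k) + \text{div}(\beta_2) = \text{div}(\beta_2)$. For the forward direction, I would introduce the ratio $f = \beta_1/\beta_2$, which is a nonzero meromorphic function on $S$. By part (1),
\begin{equation*}
\text{div}(f) = \text{div}(\beta_1) - \text{div}(\beta_2) = 0.
\end{equation*}
Thus $f$ has no zeros and no poles. I would then invoke the standard fact that a meromorphic function on a compact, connected Riemann surface with trivial divisor is a (nonzero) constant: in the case $S = \mathbb P^1(\mathbb C)$ this is immediate from the partial-fraction description of rational functions, while for $S = E(\mathbb C)$ one lifts to the universal cover $\mathbb C/\Lambda \simeq E(\mathbb C)$ and applies Liouville's theorem to the bounded holomorphic lift. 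Setting $k = f \in \mathbb C^\times$ then gives $\beta_1 = k \cdot \beta_2$.

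The main obstacle is the forward direction of (2), specifically the ``trivial divisor implies constant'' step, since the excerpt develops order and divisor combinatorially but does not explicitly record the maximum-modulus or Liouville input. I would cite this as a standard consequence of compactness of $S$ (for instance via \cite[Chapter II.3]{MR2514094}), rather than re-deriving it. Part (1) is essentially bookkeeping once the valuation property of $\ord_P$ is in hand, so the real content of the proposition lies in that single appeal to a global rigidity result on compact Riemann surfaces.
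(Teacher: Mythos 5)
Your proposal is essentially correct, but note that the paper does not prove this proposition at all: it is stated as background and discharged with a citation to Silverman (Chapter II.3, Proposition 3.1), so there is no in-paper argument to compare against. Your sketch is the standard proof and matches what the cited reference does. Two small points are worth tightening. First, in part (1) the containment $M_P^{e_1} \cdot M_P^{e_2} \subseteq M_P^{e_1+e_2}$ only yields $\ord_P(\beta_1\beta_2) \geq e_1 + e_2$; to get equality (i.e., that the product does not land in $M_P^{e_1+e_2+1}$) you need that $\mathcal O_P$ at a smooth point of a curve is a discrete valuation ring, equivalently that the associated graded ring $\bigoplus_e M_P^e/M_P^{e+1}$ is an integral domain -- ``non-vanishing of the leading terms'' is the right idea but this is the step that actually requires smoothness of $S$. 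Second, in part (2) the universal cover of $E(\mathbb C) \simeq \mathbb C/\Lambda$ is $\mathbb C$ itself; the lift of your nowhere-zero holomorphic $f$ is a $\Lambda$-periodic entire function, bounded because $E(\mathbb C)$ is compact, hence constant by Liouville. With those clarifications the argument is complete, and the global rigidity input (holomorphic plus compact implies constant) is exactly the content you correctly identified as the heart of part (2).
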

\noindent For more details, see \cite[Chapter II.3, Proposition 3.1, page 28]{MR2514094}.


\section{Initial Investigations}

\subsection{Motivating Examples and Questions}

Given a \Belyi map on an elliptic curve, we can look at its quasi-critical points. We can look at how the quasi-critical points interact with the elliptic curve group law. To get a better idea of this, let us look at a couple of examples.

Consider the Toroidal \Belyi pair $(E, \beta)$ with $E$ the curve defined by $f(x,y) = y^2 - (x^3 + 1)$ and $\beta(x,y) = (1-y)/2$. We can compute the critical points $P = (x,y)$ of $\beta$ by finding when the following function vanishes: $(\partial f/\partial x) \, (\partial \beta/\partial y) - (\partial f/\partial y) \, (\partial \beta/\partial x) = (3/2) \, x^2$. We find that the critical points are $\{(0, 1), \, (0, -1), \, O_E\}$, which is isomorphic to $Z_3$. 

As a second example, consider the Toroidal \Belyi pair $(E, \beta)$ with $E$ the curve defined by $f(x,y) = y^2 - (x^3 - x)$ and $\beta(x,y) = x^2$. Again, we can compute the critical points $P = (x,y)$ of $\beta$ by finding when the following function vanishes: $(\partial f/\partial x) \, (\partial \beta/\partial y) - (\partial f/\partial y) \, (\partial \beta/\partial x) = -4 \, x \, y$. We find that the collection of critical points is the set $\{(-1, 0), \, (0,0), \, (+1, 0), \, O_E\}$, which is isomorphic to $Z_2 \times Z_2$.

Following our observations from these examples, there are two main questions that arise.
\begin{quest} \label{question1} Say $(E, \beta)$ is a Toroidal \Belyi pair, and denote the collection $\Gamma = \beta^{-1} \bigl( \{ 0, \, 1, \, \infty \} \bigr)$ as the collection of quasi-critical points. When does $\Gamma$ form a subgroup of $\bigl( E(\mathbb C), \oplus \bigr)$? By Proposition \ref{thm:lagrange}, the elements in $\Gamma$ must be points with finite order whenever $\Gamma$ is a group.  When are the points in $\Gamma$ torsion elements in $E(\mathbb C)$, regardless of $\Gamma$ being a group?
\end{quest}

\subsection{Searching for Examples} \label{tesfasubsection}

We began our exploratory analysis by calculating a number of examples. We started by pulling examples of Toroidal \Belyi pairs $(X, \phi)$ from the $L$-Series and Modular Forms Database (LMFDB) \cite{lmfdb}, computing quasi-critical points $P \in \phi^{-1} \bigl( \{ 0, \, 1, \, \infty \} \bigr)$ and their ramification indices $e_P = e_\phi(P)$; some data can be found in Table \ref{table:initial_examples}.  We soon realized that we would need to write a computer program to systematically compute the quasi-critical points and their orders.  Simply put, even though the Toroidal \Belyi pair $(X, \phi)$ was defined over a number field $K = \mathbb Q(\nu)$, the quasi-critical points in general would be defined over an extension $L = K(s)$.  Figure \ref{fig:goins} contains a diagram of the various function fields where we need to carry out our computations.

\begin{figure}[htb]
\begin{equation*} \xymatrix  @!=0.2in {
{} & {} & {} & {} & {} & {} & {\mathbb C(X) = \mathcal K \bigl( X(\mathbb C) \bigr)}  \ar@{-}[d] \ar@{-}[dll] \\
{} & {} & {} & {} & {L(X) = \mathcal K \bigl( X(L) \bigr)} \ar@{-}[d] \ar@{-}[dll] & {} & {\mathbb C(x) = \mathcal K \bigl( \mathbb P^1(\mathbb C) \bigr)} \ar@{-}[dll] \ar@{-}[d] \\
{} & {} & {K(X) = \mathcal K \bigl( X(K) \bigr)} \ar@{-}[d] & {} & {L(x) = \mathcal K \bigl( \mathbb P^1(L) \bigr)} \ar@{-}[d] \ar@{-}[dll] & {} & {\mathbb C} \ar@{-}[dll] \\
{} & {} & {K(x) = \mathcal K \bigl( \mathbb P^1(K) \bigr)} \ar@{-}[d] & {} & {L = K(s)} \ar@{-}[dll] & {} & {} & {} \\ 
{} & {} & {K = \mathbb Q(\nu)} \ar@{-}[dll] & {} & {} & {} & {} \\
{\mathbb Q} & {} & {} & {} & {} & {} & {} } \end{equation*}
\caption{Function Fields related to a Toroidal \Belyi Pair $(X, \phi)$ defined over $K$}
\label{fig:goins}
\end{figure}
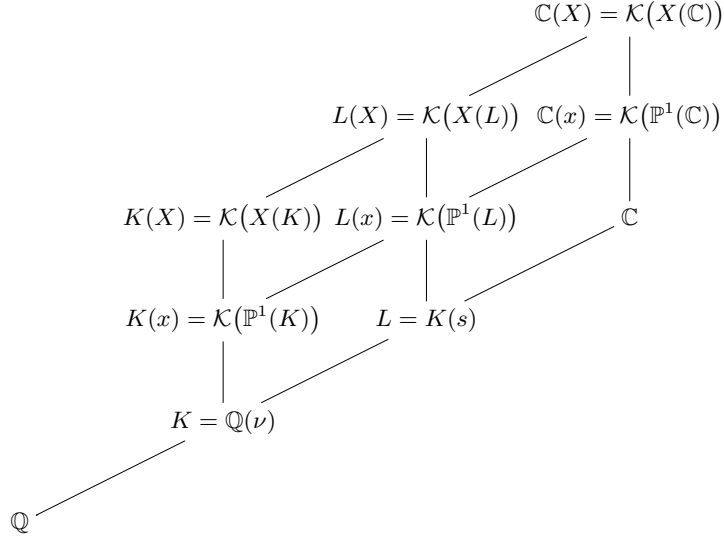

\begin{landscape} \begin{table}[!htbp]
 \centering
\begin{tabular}{llll}

{Elliptic Curve $X$} & {\Belyi Map $\phi(x,y)$} & {Quasi-Critical $P \in \phi^{-1} \bigl( \{ 0, \, 1, \, \infty \} \bigr)$} & {Ramification $e_\phi(P)$} \\[5pt] \hline \hline & & & \\[3pt]

\href{https://www.lmfdb.org/EllipticCurve/Q/36/a/4}{$y^2 = x^3 + 1$} & \href{https://beta.lmfdb.org/Belyi/3T1/3/3/3/a}{$\dfrac {y+1}{2}$} &  {$(0, \pm 1)$, $O_E$} & {3, 3, 3} \\[11pt] \hline & & & \\[3pt]

\href{https://www.lmfdb.org/EllipticCurve/Q/32/a/3}{$y^2 = x^3 - x$} & \href{https://beta.lmfdb.org/Belyi/4T1/4/4/2.2/a}{$x^2$} & {$(\pm 1,0)$, $(0,0)$, $O_E$} & {2, 2, 4, 4} \\[8pt]

\href{https://www.lmfdb.org/EllipticCurve/Q/48/a/6}{$y^2 = x^3 + x^2 + 16 \, x + 180$} & \href{https://beta.lmfdb.org/Belyi/4T5/4/4/3.1/a}{$\dfrac {4 \, y + x^2 + 56}{108}$} & {$(22, -108)$, $(-2,12)$, $(4,-18)$, $O_E$} & {1, 3, 4, 4} \\[11pt] \hline & & & \\[3pt]


\href{https://www.lmfdb.org/EllipticCurve/Q/50/b/4}{$\begin{aligned} y^2 & + x \, y + y \\ & = x^3 + x^2 + 22 \, x - 9 \end{aligned}$} & \href{https://beta.lmfdb.org/Belyi/5T4/5/5/2.2.1/a}{$\dfrac {x \, y + 3 \, x^2 + 3 \, x + 63}{64}$} & {$\begin{aligned} & (9, -37), (-1\pm2\sqrt{-5}, 3), \\ &(1,-5), O_E \end{aligned}$} & {1, 2, 2, 5, 5} \\[16pt]

\href{https://www.lmfdb.org/EllipticCurve/Q/150/c/3}{$y^2 + x \, y = x^3 - 28 \, x + 272$} & \href{https://beta.lmfdb.org/Belyi/5T4/5/5/3.1.1/a}{$\dfrac {(x+13) \, y + 3 \, x^2 + 4 \, x + 220}{432}$} & {$\begin{aligned} & \bigl( -13 \pm 3 \sqrt{-15}, \, 2 \, (37 \pm 3\sqrt{-15}) \bigr), \\ & (2, -16), (-4,20), O_E \end{aligned}$} & {1, 1, 3, 5, 5} \\[16pt]

\href{https://www.lmfdb.org/EllipticCurve/Q/75/a/2}{$y^2 + y = x^3 + x^2 + 2 \, x + 4$} & \href{https://beta.lmfdb.org/Belyi/5T4/5/5/3.1.1/b}{$\dfrac {(x+7) \, y - 5 \, x^2 - 2 \, x + 15}{27}$} & {$\begin{aligned} & \bigl( (13\pm 3\sqrt{-15})/2, \ 13\pm 6\sqrt{-15} \bigr), \\ & (-1,-2), (2,4), O_E \end{aligned}$} & {1, 1, 3, 5, 5} \\[16pt]

\href{https://www.lmfdb.org/EllipticCurve/Q/900/e/2}{$y^2 = x^3 - 120 \, x + 740$} & \href{https://beta.lmfdb.org/Belyi/5T5/5/3.2/3.2/a}{$\dfrac {(x+5)\, y + 162}{324}$} & {$(-11, \pm 27)$, $(4, \pm18)$, $O_E$} & {2, 2, 3, 3, 5} \\[16pt]


\href{https://www.lmfdb.org/EllipticCurve/Q/400/a/1}{$y^2 = x^3 + 5 \, x + 10$} & \href{https://beta.lmfdb.org/Belyi/5T5/5/4.1/4.1/a}{$\dfrac {(x-5) \, y + 16}{32}$} & {$(6, \pm 16)$, $(1, \pm 4)$, $O_E$} & {1, 1, 4, 4, 5} \\

\end{tabular} 
\vskip 0.5in
\caption{Examples of Toroidal \Belyi Pairs $(X, \phi)$ with Quasi-Critical Points and Ramification Indices}
\label{table:initial_examples}
\end{table} \end{landscape}

The outline of our methodology is as follows:

\begin{algorithm}
\renewcommand{\thealgorithm}{} 
	\caption{Compute Examples} 
	\begin{algorithmic}[1]
	\State Choose Toroidal \Belyi pair $(X,\phi)$ defined over a number field $K = \mathbb Q(\nu)$ from LMFDB.
	\State Viewing $\phi \in \mathcal K \bigl( X(\mathbb C) \bigr)$, compute $\text{div}(\phi)$, $\text{div}(\phi - 1)$, and $\text{div}(1/\phi)$ to find the quasi-critical points over $\mathbb C$.
	\State Compute the smallest number field $L = K(s)$ containing all the quasi-critical points.
	\State Extend the function field $K(X) = \mathcal K \bigl( X(K) \bigr)$ of the elliptic curve to $L$, that is, $L(X) = \mathcal K \bigl( X(L) \bigr)$.
	\State Working over $L$, compute the divisors, quasi-critical points, and their orders.
	\State Identify the smallest subgroup that contains all of the quasi-critical points.
	\State Write divisors, quasi-critical points, and the group generated by them to an external file.
	\end{algorithmic} 
\end{algorithm}

\noindent The above was all implemented using a combination of \texttt{python} and \texttt{sage} on the cloud computing provider \texttt{CoCalc}. Computational power limited our ability to calculating the number field $L$ in many cases. As a result, we implemented a ``time-out,'' a specified time interval for computing the number field after which the example would be skipped. Still, we were able to compute 13 examples of Toroidal \Belyi pairs $(X, \phi)$ for which the quasi-critical points are all torsion, that is, $\phi^{-1} \bigl( \{ 0, \, 1, \, \infty \} \bigr) \subseteq X(\mathbb C)_\tors$.  A summary of the results can be found in Table \ref{table:erik}, and the complete list of examples can be found in Table \ref{table:examples}.  The full code can be found in our \texttt{GitHub} repository \cite{PRiME2021}.

\begin{table}[!htbp]
\centering
\begin{tabular}{|c||c|c|c|}
\hline {$\deg(\phi)$} & \makecell[c]{Total from \\ LMDFB} & \makecell[c]{Total Number of \\ Successfully Processed} & \makecell[c]{Number with Quasi-Critical \\ Points All Torsion} \\ \hline \hline
3 & 1 & {1 (100\%)} & {1 (100\%)} \\
4 & 2 & {2 (100\%)} & {2 (100\%)} \\
5 & 7 & {7 (100\%)} & {1 (14\%)} \\
6 & 35 & {29 (83\%)} & {7 (24\%)} \\
7 & 73 & {15 (21\%)} & {0 (0\%)} \\
8 & 94 & {30 (32\%)} & {2 (7\%)} \\
9 & 39 & {23 (59\%)} & {0 (0\%)} \\
\hline
Totals & 251 & {107 (43\%)} & {13 (12\%)} \\
\hline
\end{tabular}
\vskip 0.2in
\caption{Processing of Toroidal \Belyi pairs ($X, \phi)$ from LMFDB.  We wish to find a number field $L$ such that $\phi^{-1} \bigl( \{ 0, \, 1, \, \infty \} \bigr) \subseteq X(L)$ -- which we may not be able to do in \texttt{sage}.}
\label{table:erik}
\end{table}

\begin{landscape} \begin{table}[!htbp]
\centering
\begin{tabular}{c|ccc} 
{\texttt{LMFDB} Label} & {Elliptic Curve $X$} & {\Belyi Map $\phi(x,y)$} & {Group Generated by $G$} \\[3pt] \hline \hline & & & \\[-3pt]
{\href{https://beta.lmfdb.org/Belyi/3T1/3/3/3/a}{3T1-3\_3\_3-a}} & {\href{https://beta.lmfdb.org/EllipticCurve/Q/36/a/4}{$y^2 = x^3 + 1$}} & {$\dfrac {1-y}{2}$} & {$Z_3$} \\[7pt] \hline & & & \\[-2pt]
{\href{https://beta.lmfdb.org/Belyi/4T1/4/4/2.2/a}{4T1-4\_4\_2.2-a}} & {\href{https://beta.lmfdb.org/EllipticCurve/Q/32/a/3}{$y^2 = x^3 - x$}} & {$1-x^2$} & {$Z_2 \times Z_2$} \\[2pt]
{\href{https://beta.lmfdb.org/Belyi/4T5/4/4/3.1/a}{4T5-4\_4\_3.1-a}} & {\href{https://beta.lmfdb.org/EllipticCurve/Q/48/a/6}{$y^2 = x^3 + x^2 + 16 \, x + 180$}} & {$\dfrac {4 \, y + x^2 + 56}{108}$} & {$Z_8$} \\[6pt] \hline & & & \\[-3pt]
{\href{https://beta.lmfdb.org/Belyi/5T4/5/5/3.1.1/a}{5T4-5\_5\_3.1.1-a}} & {\href{https://beta.lmfdb.org/EllipticCurve/Q/150/c/3}{$y^2 + x \, y = x^3 - 28 \, x + 272$}} & {$\dfrac {(x+13) \, y + 3 \, x^2 + 4 \, x + 220}{432}$} & {$Z_2 \times Z_{10}$} \\[7pt] \hline & & & \\[-2pt]
{\href{https://beta.lmfdb.org/Belyi/6T1/6/2.2.2/3.3/a}{6T1-6\_2.2.2\_3.3-a}} & {\href{https://beta.lmfdb.org/EllipticCurve/Q/36/a/4}{$y^2 = x^3 + 1$}} & {$-x^3$} & {$Z_2 \times Z_6$} \\[3pt]
{\href{https://beta.lmfdb.org/Belyi/6T4/3.3/3.3/3.3/a}{6T4-3.3\_3.3\_3.3-a}} & {\href{https://beta.lmfdb.org/EllipticCurve/Q/36/a/2}{$y^2 = x^3 - 15 \, x + 22$}} & {$\dfrac {8 \, (x-2)^2 - (x^2-4 \, x+ 7) \, y}{16 \, (x-2)^2}$} & {$Z_6$} \\[7pt]
{\href{https://beta.lmfdb.org/Belyi/6T5/6/6/3.1.1.1/a}{6T5-6\_6\_3.1.1.1-a}} & {\href{https://beta.lmfdb.org/EllipticCurve/Q/36/a/4}{$y^2 = x^3 + 1$}} & {$\dfrac {(1-y) \, (3+y)}{4}$} & {$Z_2 \times Z_6$} \\[7pt]
{\href{https://beta.lmfdb.org/Belyi/6T6/6/6/2.2.1.1/a}{6T6-6\_6\_2.2.1.1-a}} & {\href{https://beta.lmfdb.org/EllipticCurve/Q/72/a/5}{$y^2 = x^3 + 6 \, x - 7$}} & {$\dfrac {(x-1)^3}{27}$} & {$Z_2 \times Z_4$} \\[7pt]
{\href{https://beta.lmfdb.org/Belyi/6T7/4.2/4.2/3.3/a}{6T7-4.2\_4.2\_3.3-a}} & {\href{https://beta.lmfdb.org/EllipticCurve/Q/7056/q/3}{$y^2 = x^3 - 10731 \, x + 408170$}} & {$\dfrac {11907 \, (x-49)}{(x-7)^3}$} & {$Z_2 \times Z_4$} \\[7pt]
{\href{https://beta.lmfdb.org/Belyi/6T12/5.1/5.1/3.3/b}{6T12-5.1\_5.1\_3.3-b}} & {\href{https://beta.lmfdb.org/EllipticCurve/Q/15/a/5}{$y^2 + x \, y + y = x^3 + x^2 - 10 \, x - 10$}} & {$27 \, \dfrac {(x + 4) \, (2 \, x^2 - 2 \, x - 13) - (x+1)^2 \, y}{(x^2 - x - 11)^3} $} & {$Z_2 \times Z_8$} \\[7pt]
{\href{https://beta.lmfdb.org/Belyi/6T12/5.1/5.1/5.1/a}{6T12-5.1\_5.1\_5.1-a}} & {\href{https://beta.lmfdb.org/EllipticCurve/Q/20/a/4}{$y^2 = x^3 + x^2 + 4 \, x + 4$}} & {$-16 \, \dfrac{(x^2-2 \, x - 4) \, y + 8 \, (x+1)}{(x-4) \, x^5}$} & {$Z_6$} \\[7pt] \hline & & & \\[-2pt]
{\href{https://beta.lmfdb.org/Belyi/8T2/4.4/4.4/2.2.2.2/a}{8T2-4.4\_4.4\_2.2.2.2-a}} & {\href{https://beta.lmfdb.org/EllipticCurve/Q/64/a/4}{$y^2 = x^3 + x$}} & {$\dfrac{(x+1)^4}{8 \, x \, (x^2 + 1)}$} & {$Z_2 \times Z_4$} \\[7pt]
{\href{https://beta.lmfdb.org/Belyi/8T7/8/8/2.2.1.1.1.1/a}{8T7-8\_8\_2.2.1.1.1.1-a}} & {\href{https://beta.lmfdb.org/EllipticCurve/Q/32/a/3}{$y^2 = x^3 - x$}} & {$x^4$} & {$Z_2 \times Z_4$} \\
\end{tabular}

\vskip 0.5in
\caption{Examples of Toroidal \Belyi Pairs $(X, \phi)$ with Quasi-Critical Points $G = \phi^{-1} \bigl( \{ 0, \, 1, \, \infty \} \bigr) \subseteq X(\mathbb C)_\tors$}
\label{table:examples}
\end{table} \end{landscape}

\nocite{sagemath}
\nocite{lmfdb}


\section{Main Results}

\noindent In this section, we list the main results from our research.

We observed that, given a Toroidal \Belyi pair $(E, \beta)$, we could construct another Toroidal \Belyi pair $(X, \phi)$ where $X = E$ is the same elliptic curve but $\phi(x,y) = \beta \bigl( (x,y) \oplus P_0 \bigr)$ could be the translate by point $P_0 \in E(\mathbb C)$.  If there is any hope of the collection of quasi-critical points being torsion, then we would need to limit the possibilities of quasi-critical points by choosing certain translates of \Belyi maps.  The following proposition explains one way we can do this.  

\begin{theorem} \label{thm:tesfa}
Say $(E, \beta)$ is a Toroidal \Belyi pair, with $N = \deg(\beta)$, and denote
\[ Q_0 = \bigoplus_{P \in \beta^{-1}(\{0\})} [e_P] P = \bigoplus_{P \in \beta^{-1}(\{1\})} [e_P] P = \bigoplus_{P \in \beta^{-1}(\{\infty\})} [e_P] P. \] 
\noindent Then $\beta$ can be normalized, that is, there exists $P_0 \in E(\mathbb C)$ satisfying $[N] P_0 = Q_0$ such that $\beta \bigl( (x,y) \oplus P_0 \bigr) = f(x,y) / g(x,y)$ for two polynomials $f, \, g \in \mathcal K \bigl( E(\mathbb C) \bigr)$ with divisors
\[ \begin{aligned}
\text{div}(f) & = \sum_{P \in B} e_P \, (P) - N \, (O_E) \\
\text{div}(f - g) & = \sum_{P \in W} e_P \, (P) - N \, (O_E) \\
\text{div}(g) & = \sum_{P \in F} e_P \, (P) - N \, (O_E)
\end{aligned} \qquad \text{where} \qquad \begin{aligned}
B & = \beta^{-1}(\{0\}) \ominus P_0, \\[5pt] 
W & = \beta^{-1}(\{1\}) \ominus P_0, \\[5pt] 
F & = \beta^{-1}(\{\infty\}) \ominus P_0.
\end{aligned} \]
\end{theorem}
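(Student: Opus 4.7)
The plan is to start by verifying that the point $Q_0$ is well-defined—that is, that the three sums appearing in its definition are actually equal. This follows from the principal divisor criterion of Proposition \ref{prop:principaldivisorprop} applied to $\text{div}(\beta)$ and $\text{div}(\beta - 1)$: the pullback formula of Proposition \ref{prop:tesfa} writes these as $\sum_{P \in \beta^{-1}(\{0\})} e_P (P) - \sum_{P \in \beta^{-1}(\{\infty\})} e_P (P)$ and $\sum_{P \in \beta^{-1}(\{1\})} e_P (P) - \sum_{P \in \beta^{-1}(\{\infty\})} e_P (P)$ respectively, and since both divisors are principal, Proposition \ref{prop:principaldivisorprop} forces the weighted sums of their points to vanish in $E(\mathbb{C})$. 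That equates the three candidate values of $Q_0$.

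Next I would use the fact that $E(\mathbb{C}) \simeq (\mathbb{R}/\mathbb{Z}) \times (\mathbb{R}/\mathbb{Z})$ (Proposition \ref{prop:torsion}) is a divisible group to pick $P_0$ with $[N]P_0 = Q_0$, and set $\tilde{\beta}(P) = \beta(P \oplus P_0)$. Translation by $P_0$ is a biholomorphism of $E(\mathbb{C})$, so ramification indices are preserved and $\tilde{\beta}^{-1}(\{0\}) = B$, $\tilde{\beta}^{-1}(\{1\}) = W$, $\tilde{\beta}^{-1}(\{\infty\}) = F$, with the same multiplicities $e_P$ inherited from $\beta$.

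The crux of the argument is to show that the three divisors $D_B = \sum_{P \in B} e_P (P) - N(O_E)$, $D_W = \sum_{P \in W} e_P (P) - N(O_E)$, and $D_F = \sum_{P \in F} e_P (P) - N(O_E)$ are principal on $E$. Each has degree zero by Proposition \ref{prop:sumramind}, so by Proposition \ref{prop:principaldivisorprop} it remains to check that the group sum of points with multiplicity is $O_E$. For $D_B$, a direct computation expanding $B = \beta^{-1}(\{0\}) \ominus P_0$ gives $\bigoplus_{P \in B} [e_P] P = Q_0 \ominus [N] P_0$, which vanishes by the choice of $P_0$; the arguments for $D_W$ and $D_F$ are identical, and here one sees why $Q_0$ was engineered to equal the sum for all three preimage sets simultaneously.

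Finally I would choose meromorphic $f, \, g$ with $\text{div}(f) = D_B$ and $\text{div}(g) = D_F$. Since $\text{div}(\tilde{\beta}) = D_B - D_F = \text{div}(f/g)$, Proposition \ref{prop:tesfa2} forces $\tilde{\beta} = k \cdot (f/g)$ for some nonzero $k \in \mathbb{C}$, and absorbing $k$ into $f$ leaves $\text{div}(f)$ unchanged while yielding $\tilde{\beta} = f/g$ on the nose. The divisor of $f - g$ then drops out of the identity $\tilde{\beta} - 1 = (f - g)/g$: combining $\text{div}(\tilde{\beta} - 1) = \sum_{P \in W} e_P (P) - \sum_{P \in F} e_P (P)$ with $\text{div}(g) = D_F$ produces exactly $D_W$. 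The main obstacle I anticipate is purely notational bookkeeping—distinguishing equalities in $\mathbb{Z}$ (degrees) from equalities in $E(\mathbb{C})$ (group sums)—since the geometric content is essentially forced once $P_0$ is fixed.
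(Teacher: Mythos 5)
Your proposal is correct and follows essentially the same route as the paper's proof: establish that $Q_0$ is well-defined via Propositions \ref{prop:tesfa} and \ref{prop:principaldivisorprop}, obtain $P_0$ from the surjectivity of multiplication by $N$ (you cite divisibility of the torus, the paper cites Proposition \ref{prop:nonconstantisogeny} --- the same fact), verify the candidate divisors are principal by the degree and group-sum criteria, and recover $\beta((x,y)\oplus P_0) = f/g$ up to a constant absorbed into $f$, with $\text{div}(f-g)$ obtained from the identity $\phi - 1 = (f-g)/g$. The only cosmetic difference is that you check the divisor supported on $W$ is principal directly, whereas the paper derives it as a byproduct of the $f-g$ computation.
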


\noindent Observe that if $\beta: E(\mathbb C) \to \mathbb P^1(\mathbb C)$ is a normalized Toroidal \Belyi map, then we may choose $P_0 = Q_0 = O_E$.  Hence the quasi-critical points, i.e. the points in $B$, $W$, and $F$, have relations involving their ramification indices.  For instance, if we have a normalized \Belyi map with $\beta^{-1}( \{ q \} ) = \{ P \}$ for some $q \in \mathbb P^1(\mathbb C)$, then $q \in \{ 0, \, 1, \, \infty \}$ must be a critical value, and so $[N] P = O_E$ since $e_\beta(P) = N$ is the degree of the \Belyi map.

\begin{proof}
Denote $\phi(x,y) = \beta \bigl( (x,y) \oplus P_0 \bigr)$. Then, observe that $\phi^{-1}(\{q\}) = \beta^{-1}(\{q\}) \ominus P_0$, for any $q \in \mathbb P^1(\mathbb C)$. Recall that $e_P = e_\beta(P) = e_\phi(P \ominus P_0)$. Then, by Proposition \ref{prop:tesfa}, we have the principal divisors
\begin{equation*} \begin{aligned} \text{div}(\phi) & = \sum_{P \in B} e_P (P) - \sum_{P \in F} e_P(P) \\[5pt] \text{div}(\phi - 1) & = \sum_{P \in W} e_P (P) - \sum_{P \in F} e_P(P) \end{aligned} \qquad \text{where} \qquad \begin{aligned} B & = \beta^{-1}(\{0\}) \ominus P_0 =  \phi^{-1}(\{0\}), \\[5pt] W & = \beta^{-1}(\{1\}) \ominus P_0 =  \phi^{-1}(\{1\}), \\[5pt] F & = \beta^{-1}(\{\infty\}) \ominus P_0 =  \phi^{-1}(\{\infty\}). \end{aligned} \end{equation*}

\noindent Then, it follows from Proposition \ref{prop:principaldivisorprop} that

\begin{equation*}
\begin{aligned}
\left( \bigoplus_{P \in B} [e_P] P \right) \ominus \left( \bigoplus_{P \in F} [e_P] P \right) \,= \left( \bigoplus_{P \in W} [e_P] P \right) \ominus \left( \bigoplus_{P \in F} [e_P] P \right) = O_E.
\end{aligned}
\end{equation*}

\noindent The statement for $Q_0$ follows. To show that $P_0$ exists as claimed, consider the map $\psi: E(\mathbb C) \to E(\mathbb C)$ defined by $\psi(P)= [N] P$. Proposition \ref{prop:nonconstantisogeny} asserts that $\psi$ is surjective, hence the statement for $P_0$ follows. We will show that $f, g$ exist as claimed by showing that $D_1 = \sum_{P \in B} e_P \, (P) - N \, (O_E)$ and $D_2 = \sum_{P \in F} e_P \, (P) - N \, (O_E)$ are principal divisors. First, consider $D_1$. Then, $\text{deg}(D_1) = \sum_{P \in B} e_P \, - N \,= N - N \,= 0$ by Proposition \ref{prop:sumramind}; and, by the definition of $Q_0 = [N] P_0$,

\[\begin{aligned} \left(\bigoplus\limits_{P \in B} [e_P] P\right) \oplus [-N] O_E \,= \bigoplus\limits_{P \in \beta^{-1}(\{0\})} [e_P] (P \ominus P_0) \,= [N] P_0 \ominus [N] P_0 \, = O_E. \end{aligned}\]

\noindent It follows from Proposition \ref{prop:principaldivisorprop} that there exists $f \in \mathcal K \bigl( E(\mathbb C) \bigr)$ such that $\text{div}(f) = D_1$. By a similar argument, there exists $g \in \mathcal K \bigl( E(\mathbb C) \bigr)$ such that $\text{div}(g) = D_2$. Now observe that

\[ \begin{aligned} \text{div} (f/g) & = \text{div}(f) - \text{div}(g) = \left(\sum_{P \in B} e_P \, (P) - N \, (O_E)\right) - \left(\sum_{P \in F} e_P \, (P) - N \, (O_E)\right)\\ & = \text{div}(\phi). \end{aligned} \]

\noindent Therefore, Proposition \ref{prop:tesfa2} asserts that $\phi = k \cdot f/g$, for some constant $k$. Substituting $k \cdot f$ as $f$, if necessary, we see that $\phi = f/g$. Consider $\text{div}(f-g)$. Using that $\phi = f/g$, substitute in $f= \phi \cdot g$ to see that

\[ \begin{aligned} \text{div}(f-g) & = \text{div}\bigl( g \cdot (\phi - 1) \bigr) = \text{div}(g) + \text{div}(\phi - 1) \\[3pt] & = \left(\sum_{P \in F} e_P \, (P) - N \, (O_E)\right) + \left(\sum_{P \in W} e_P (P) - \sum_{P \in F} e_P(P)\right) \\[3pt] & = \sum_{P \in W} e_P \, (P) - N \, (O_E). \end{aligned}\]

\end{proof}

\noindent Recall from Table \ref{table:examples} that we have 13 examples of Toroidal \Belyi pairs $(E, \beta)$ where the collection of quasi-critical points are torsion.  The phenomenon that we have seen so far where the quasi-critical points forms a group occurs infinitely often. 

\begin{theorem} \label{thm:maria}
Say $(X,\phi)$ a Toroidal \Belyi pair, and denote $G = \phi^{-1} \bigl( \{ 0, \, 1, \, \infty \} \bigr)$ as the set of quasi-critical points. Take $\beta = \phi \circ \psi$, where $\psi: E \to X$ is any non-constant isogeny, and denote $\Gamma = \beta^{-1} \bigl( \{ 0, \, 1, \, \infty \} \bigr)$.
\begin{enumerate}
\item $(E,\beta)$ is a Toroidal \Belyi pair.
\item $\Gamma$ is contained in the torsion in $E(\mathbb{C})$ whenever $G$ is contained in the torsion in $X(\mathbb{C})$. 
\item $\Gamma$ is a group whenever $G$ is group.
\end{enumerate}
\end{theorem}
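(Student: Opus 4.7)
The plan is to handle the three items in turn, each reducing to a standard fact about non-constant isogenies. Throughout, the key observation is the identity $\Gamma = \beta^{-1}\bigl(\{0,1,\infty\}\bigr) = \psi^{-1}\bigl(\phi^{-1}(\{0,1,\infty\})\bigr) = \psi^{-1}(G)$. For part (1), I would argue that $\beta$ has at most three critical values, all contained in $\{0,1,\infty\}$. The main input is that a non-constant isogeny of elliptic curves over $\mathbb{C}$ is unramified: using the identification $E(\mathbb{C}) \simeq (\mathbb{R}/\mathbb{Z})^2$ from Proposition \ref{prop:torsion} and the finiteness of $\ker(\psi)$ from Proposition \ref{prop:nonconstantisogeny}, $\psi$ is a finite covering of compact tori, so $e_\psi(Q) = 1$ at every point $Q \in E(\mathbb{C})$. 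By multiplicativity of ramification for composites, $e_\beta(Q) = e_\phi\bigl(\psi(Q)\bigr) \cdot e_\psi(Q) = e_\phi\bigl(\psi(Q)\bigr)$, so $Q$ is critical for $\beta$ if and only if $\psi(Q)$ is critical for $\phi$, and the critical values of $\beta$ coincide with those of $\phi$.

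For part (2), suppose $G \subseteq X(\mathbb{C})_\tors$ and pick any $Q \in \Gamma = \psi^{-1}(G)$. Then $P = \psi(Q) \in G$ satisfies $[n]P = O_X$ for some $n \geq 1$, and since $\psi$ is a homomorphism, $\psi([n]Q) = [n]P = O_X$, i.e.\ $[n]Q \in \ker(\psi)$. By Proposition \ref{prop:nonconstantisogeny}, $\ker(\psi)$ is a finite subgroup of $E(\mathbb{C})$; combined with Proposition \ref{thm:lagrange}, this yields $[m]\bigl([n]Q\bigr) = O_E$ for $m = |\ker(\psi)|$, so $Q$ has finite order and $Q \in E(\mathbb{C})_\tors$. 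For part (3), if $G$ is a subgroup of $X(\mathbb{C})$, then $\Gamma = \psi^{-1}(G)$ is a subgroup of $E(\mathbb{C})$ by the standard fact that preimages of subgroups under group homomorphisms are subgroups: $\psi(O_E) = O_X \in G$ gives $O_E \in \Gamma$; $\psi(Q_1 \oplus Q_2) = \psi(Q_1) \oplus \psi(Q_2) \in G$ gives closure under $\oplus$; and $\psi([-1]Q) = [-1]\psi(Q) \in G$ gives closure under inverses.

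The main obstacle is really the unramifiedness of $\psi$ needed for part (1): while standard, it is not stated as a proposition in the excerpt, so I would either cite it from \cite{MR2514094} or argue it directly from the covering-space structure of the torus together with the finiteness of $\ker(\psi)$. Once that step is in hand, parts (2) and (3) are short, essentially formal consequences of the identity $\Gamma = \psi^{-1}(G)$ combined with Propositions \ref{thm:lagrange} and \ref{prop:nonconstantisogeny}.
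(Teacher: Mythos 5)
Your proof is correct and follows essentially the same route as the paper's: parts (2) and (3) are verbatim the paper's arguments via the identity $\Gamma = \psi^{-1}(G)$, the finiteness of $\ker(\psi)$ from Proposition \ref{prop:nonconstantisogeny}, and Proposition \ref{thm:lagrange}. For part (1) the paper simply asserts $e_\beta(P) = e_\phi\bigl(\psi(P)\bigr)$ without comment (arguing by contradiction rather than directly), so your explicit justification that a non-constant isogeny is unramified is a welcome filling-in of a step the paper leaves implicit, not a genuinely different approach.
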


\noindent In order to prove this theorem, we will proceed with the proofs of numerous lemmata. Observe that $\Gamma = \{ P \in E(\mathbb{C}) ~|~ \psi(P) \in G\} = \psi^{-1}(G)$; this will be useful in the proofs.

\begin{lem}
 $(E, \beta)$ is a Toroidal \Belyi pair.
\end{lem}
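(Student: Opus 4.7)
The plan is to verify the three defining properties of a Toroidal \Belyi pair for $(E,\beta)$: that $E$ is an elliptic curve, that $\beta$ is a meromorphic function $E(\mathbb{C}) \to \mathbb{P}^1(\mathbb{C})$, and that its critical values lie in $\{0,1,\infty\}$. The first two are essentially free: $E$ is an elliptic curve by the hypothesis that $\psi: E \to X$ is an isogeny, and $\beta = \phi \circ \psi$ is meromorphic as the composition of a morphism of elliptic curves with a meromorphic function on $X$. So the real content is controlling the critical values.

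To control them, I would use the multiplicativity of ramification indices under composition: for every $P \in E(\mathbb{C})$,
\[ e_\beta(P) \;=\; e_\psi(P)\cdot e_\phi\bigl(\psi(P)\bigr). \]
The crucial supporting lemma is that every non-constant isogeny $\psi: E \to X$ is unramified, i.e.\ $e_\psi(P) = 1$ at every point $P$. To see this, I would argue group-theoretically: because $\psi$ is a group homomorphism, each non-empty fiber $\psi^{-1}(Q)$ is a coset of $\ker(\psi)$, and hence has cardinality $|\ker(\psi)|$. Proposition \ref{prop:nonconstantisogeny} asserts $\psi$ is surjective with finite kernel, so every fiber has size $|\ker(\psi)|$. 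Since this common fiber size equals $\deg(\psi) = \sum_{P \in \psi^{-1}(Q)} e_\psi(P)$, the only possibility is $e_\psi(P) = 1$ for all $P$.

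Once the unramifiedness of $\psi$ is in hand, the proof closes quickly. If $v \in \mathbb{P}^1(\mathbb{C})$ is a critical value of $\beta$, then some $P \in \beta^{-1}(\{v\})$ satisfies $e_\beta(P) \geq 2$; by the multiplicativity identity this forces $e_\phi\bigl(\psi(P)\bigr) \geq 2$, so $\psi(P)$ is a critical point of $\phi$ and $v = \phi(\psi(P))$ is a critical value of $\phi$. Because $(X,\phi)$ is a Toroidal \Belyi pair, every critical value of $\phi$ lies in $\{0,1,\infty\}$; hence so does $v$, and $(E,\beta)$ is a Toroidal \Belyi pair.

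The main obstacle is the unramifiedness step, since the paper states only that $\psi$ is surjective with finite kernel and does not record $e_\psi \equiv 1$ explicitly. I would therefore write this step carefully, invoking Proposition \ref{prop:nonconstantisogeny} together with the standard fiber-summation identity for $\deg(\psi)$; the remaining verifications are formal consequences of the chain rule for ramification indices.
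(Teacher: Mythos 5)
Your proposal is correct and follows essentially the same route as the paper: both arguments reduce to the identity $e_\beta(P) = e_\phi\bigl(\psi(P)\bigr)$, which forces every critical value of $\beta$ to already be a critical value of $\phi$ and hence to lie in $\{0,1,\infty\}$. The only difference is that the paper asserts this identity without comment, whereas you supply the justification (unramifiedness of a non-constant isogeny via the coset structure of its fibers), which is a worthwhile addition rather than a divergence.
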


\begin{proof}
Assume by way of contradiction that $\beta= \phi \circ \psi$ is not a \Belyi map. By assumption, there exists a point $P \in E(\mathbb C)$ such that $\beta(P)=q\not\in \{0, 1, \infty\}$ is a critical value. Since $q$ is a critical value, $e_\beta(P) \geq 2$. 
However 
$e_\beta(P)= e_\phi(\psi(P))$, it follows that $e_\phi(Q) \geq 2$ for some $Q= \psi(P) \in X(\mathbb{C})$. Then $Q$ is a critical point for $\phi$ with value $q = \beta(P) = \phi(Q)$. 
Then, $\phi$ has a critical value $q \not\in \{0, 1, \infty\}$, which is a contradiction. Therefore, $\beta$ is a \Belyi map. 
\end{proof}

\begin{lem}
If $G\subseteq X(\mathbb C)_\tors$ then $\Gamma \subseteq E(\mathbb{C})_{\tors}$. 
\end{lem}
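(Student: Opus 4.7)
The plan is to show that every point $P \in \Gamma$ has finite order by combining the hypothesis $G \subseteq X(\mathbb C)_\tors$ with the fact that $\ker(\psi)$ is finite (Proposition \ref{prop:nonconstantisogeny}). Concretely, I would take an arbitrary $P \in \Gamma = \psi^{-1}(G)$ and chase $[n]P$ through $\psi$, using that $\psi$ is a group homomorphism of elliptic curves.

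First I would fix $P \in \Gamma$, so that $\psi(P) \in G \subseteq X(\mathbb C)_\tors$. By the definition of torsion, there exists a positive integer $n$ with $[n]\psi(P) = O_X$. Since $\psi$ is an isogeny, hence a group homomorphism, we have $\psi([n]P) = [n]\psi(P) = O_X$, so $[n]P \in \ker(\psi)$. Next, I would invoke Proposition \ref{prop:nonconstantisogeny}, which says that $\ker(\psi)$ is a \emph{finite} subgroup of $E(\mathbb C)$; in particular, every element of $\ker(\psi)$ has finite order by Proposition \ref{thm:lagrange}. Therefore, there exists a positive integer $m$ such that $[m]\bigl([n]P\bigr) = O_E$.

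Putting these together, $[mn] P = O_E$, so $P \in E(\mathbb C)_\tors$. Since $P \in \Gamma$ was arbitrary, this gives $\Gamma \subseteq E(\mathbb C)_\tors$, as claimed.

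The argument is essentially a routine two-step torsion chase, so I do not anticipate a genuine obstacle; the only subtle point is recognizing that one needs \emph{two} applications of finite order (one coming from $G$ in $X(\mathbb C)$, another coming from finiteness of $\ker(\psi)$ in $E(\mathbb C)$) rather than a single one, and that both are guaranteed by results already stated in the background section.
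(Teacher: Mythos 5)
Your proof is correct and follows essentially the same two-step torsion chase as the paper: push $[n]P$ into $\ker(\psi)$ using that $\psi$ is a homomorphism, then use finiteness of $\ker(\psi)$ together with Proposition \ref{thm:lagrange} to kill $[n]P$ by some $m$. No meaningful differences.
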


\begin{proof}
Take $Q=\psi(P) \in G$ with $P\in \Gamma$. Since $G\subseteq X(\mathbb C)_\tors$, then there exists a positive integer $n$ such that $[n]Q=O_X$. Since $\psi$ is a group homomorphism, then $[n]Q= [n]\psi(P)= \psi([n]P)$. It follows that $\psi([n]P)=O_X$.
Thus, $[n]P \in \ker(\psi)$, 
which is shown to be finite in Proposition \ref{prop:nonconstantisogeny}.
By Proposition \ref{thm:lagrange}, 
there exists a positive integer $m$ such that $[m]R=O_E$ for any $R\in \ker(\psi)$. Denoting $N=m \, n$, we have $[N] P = [m] \bigl( [n] P \bigr) = O_E$, showing $P \in E(\mathbb C)_\tors$. Thus, $\Gamma \subseteq E(\mathbb{C})_{\tors}$.
\end{proof}

\begin{lem}
Suppose $(G, \oplus)$ is a group. Then $\Gamma$ is a subgroup of $(E(\mathbb C), \oplus)$.
\end{lem}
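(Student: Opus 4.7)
The plan is to show this is a standard fact from group theory: $\Gamma = \psi^{-1}(G)$ is the preimage of a subgroup under the group homomorphism $\psi$, and hence is itself a subgroup. So I would proceed by verifying the three subgroup axioms (identity, closure, inverses), using that $\psi$ is a homomorphism (being an isogeny, by Proposition \ref{prop:nonconstantisogeny} and the definition of isogeny) and that $G$ is a subgroup of $(X(\mathbb{C}), \oplus)$.

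First I would note that $O_E \in \Gamma$. Because $G$ is a subgroup of the abelian group $X(\mathbb{C})$, its identity element is forced to coincide with $O_X$: if $e \in G$ is the identity of $G$, then $e \oplus e = e$ in $X(\mathbb{C})$, and cancellation yields $e = O_X$. Since $\psi$ is a group homomorphism we have $\psi(O_E) = O_X \in G$, so $O_E \in \psi^{-1}(G) = \Gamma$.

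Next I would show closure. If $P_1, P_2 \in \Gamma$, then $\psi(P_1), \psi(P_2) \in G$. Since $\psi$ is a homomorphism,
\[ \psi(P_1 \oplus P_2) = \psi(P_1) \oplus \psi(P_2), \]
and the right-hand side lies in $G$ because $G$ is closed under $\oplus$. Hence $P_1 \oplus P_2 \in \Gamma$. Finally, for inverses: if $P \in \Gamma$ then $\psi(P) \in G$, so $[-1] \psi(P) \in G$ since $G$ is a subgroup; but $\psi([-1]P) = [-1]\psi(P)$ by homomorphism, giving $[-1]P \in \Gamma$. Together with associativity inherited from $E(\mathbb{C})$, this verifies that $\Gamma$ is a subgroup of $(E(\mathbb{C}), \oplus)$.

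There is no real obstacle here — the only mild subtlety is the observation that the identity of the subgroup $G$ must be $O_X$ rather than some other element, which is what lets us conclude $O_E \in \Gamma$ from $\psi(O_E) = O_X$. Everything else is a direct application of the homomorphism property of the isogeny $\psi$.
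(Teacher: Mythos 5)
Your proof is correct and follows essentially the same route as the paper: both arguments show that $\Gamma = \psi^{-1}(G)$ is a subgroup by applying the homomorphism property of the isogeny $\psi$ to the subgroup axioms, the only cosmetic difference being that the paper uses the one-step test (non-empty and closed under differences) while you check identity, closure, and inverses separately. Your added observation that the identity of $G$ must coincide with $O_X$ is a small point of care that the paper leaves implicit.
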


\begin{proof}
To show that $\Gamma$ is a subgroup of $(E(\mathbb C), \oplus)$, we show that (i) $\Gamma$ is a non-empty set and (ii) that $\Gamma$ is closed under differences. For (i), $\psi(O_E) = O_X \in G$ because $(G, \oplus)$ is a group and $\psi$ is a group homomorphism, so $O_E \in \psi^{-1}(G) = \Gamma$. For (ii), consider $\psi(P), \psi(Q) \in G$ where $P,Q \in \Gamma$. Since $(G, \oplus)$ is a group, we have $\psi(P \ominus Q) = \psi(P) \ominus \psi(Q) \in G$, which means that $P \ominus Q \in \Gamma$. Thus, $\Gamma$ is a subgroup of $(E(\mathbb C), \oplus)$.
\end{proof}

\begin{coro} \label{coro:maria}
There are infinitely many imprimitive Toroidal \Belyi pairs where the set of quasi-critical points forms a group.
\end{coro}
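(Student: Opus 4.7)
My plan is to deduce the corollary from Theorem \ref{thm:maria} by exhibiting an explicit infinite family of imprimitive pairs, all built from a single base pair together with the multiplication-by-$n$ isogenies on an elliptic curve.

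First I would fix a base Toroidal \Belyi pair $(X_0, \phi_0)$ drawn from Table \ref{table:examples} for which the set of quasi-critical points $G_0 = \phi_0^{-1}\bigl(\{0, 1, \infty\}\bigr)$ is known to be a group; for instance, $X_0 : y^2 = x^3 + 1$ and $\phi_0(x,y) = (1-y)/2$, where $G_0 \simeq Z_3$. For each integer $n \geq 2$, I would take $\psi = [n] : X_0(\mathbb{C}) \to X_0(\mathbb{C})$ to be the multiplication-by-$n$ endomorphism in the role of $\psi$ in Theorem \ref{thm:maria}. This map is a non-constant isogeny: it is a group homomorphism by Proposition \ref{prop:compositionprop}, and it is not the zero map because $X_0(\mathbb{C})_{\tors}$ contains points of order strictly greater than $n$ by Proposition \ref{prop:torsion}. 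Setting $\beta_n = \phi_0 \circ [n]$, parts (1) and (3) of Theorem \ref{thm:maria} immediately produce a Toroidal \Belyi pair $(X_0, \beta_n)$ whose set of quasi-critical points $\Gamma_n = \beta_n^{-1}\bigl(\{0, 1, \infty\}\bigr)$ is a subgroup of $\bigl(X_0(\mathbb{C}), \oplus\bigr)$. Imprimitivity follows from the decomposition $\beta_n = \gamma \circ \phi_0 \circ [n]$ with $\gamma$ the identity on $\mathbb{P}^1(\mathbb{C})$, which is trivially a dynamical \Belyi map; since $[n]$ is a non-trivial isogeny for $n \geq 2$, the exhibited decomposition is non-trivial. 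The pairs are pairwise distinct because $\deg(\beta_n) = n^2 \cdot \deg(\phi_0)$ is strictly increasing in $n$, so the family is infinite.

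The main subtlety lies in the interpretation of the phrase ``non-trivial composition'' in the definition of imprimitive. If the convention is that non-triviality of any single factor suffices, the argument above goes through as stated. If instead all three factors must be non-trivial, I would replace the base pair by one whose \Belyi map itself factors visibly through a dynamical \Belyi map, such as the row of Table \ref{table:examples} with $\phi_0(x,y) = x^4 = (x^2)^2$ on $y^2 = x^3 - x$, and then apply the same $[n]$ construction to obtain infinitely many imprimitive pairs in which all three factors are genuinely non-trivial.
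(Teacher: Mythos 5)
Your proof is correct and follows essentially the same route as the paper: both start from the base pair $X: y^2 = x^3 + 1$, $\phi(x,y) = (1-y)/2$ with $G \simeq Z_3$ and invoke Theorem \ref{thm:maria} for infinitely many non-constant isogenies into $X$. The paper's proof is terser --- it simply notes that infinitely many such isogenies exist and stops --- whereas you pin down the explicit family $[n]$, verify that the resulting pairs are pairwise distinct by comparing degrees, and check imprimitivity against the stated definition; these are worthwhile details the published argument leaves implicit, not gaps in your reasoning.
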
 
    
\begin{proof} Consider $X: y^2 = x^3 + 1$ and the \Belyi map $\phi(x,y) = (1-y)/2$.  We have seen that the quasi-critical points, namely $G = \phi^{-1} \bigl( \{ 0, \, 1, \, \infty \} \bigr) = \bigl \{ (0,-1), \, (0,1), \, O_E \bigr \} \simeq Z_3$, forms a group.  Theorem \ref{thm:maria} asserts that $(E, \beta)$ forms a Toroidal \Belyi pair for any non-constant isogeny $\psi: E(\mathbb C) \to X(\mathbb C)$ where $\Gamma = \beta^{-1} \bigl( \{ 0, \, 1, \, \infty \} \bigr)$ forms a group.  Since there are infinitely many such isogenies, the result follows. \end{proof}

\noindent It appears that Corollary \ref{coro:maria} can generate infinitely many examples of Toroidal \Belyi pairs $(E, \beta)$ where the quasi-critical points forms a group, but we can only show this for imprimitive such pairs.  Table \ref{table:imprimitive} shows how many of the examples from Table \ref{table:examples} are actually associated to imprimitive Toroidal \Belyi pairs.


\section{Future Work}

Upon completion of this project, we have three main goals for future related work. Firstly, we would like to modify the Sage code so that we can process more examples. Next, we would like to know if there are more examples with quasi-critical points and which cannot be explained by our main theorem. We have 13 examples where the quasi-critical points are torsion, and we have one example that can be explained by our main theorem.  Finally, we would like to create a web page where we can host the data found over the summer so that others may easily access and view our results.


\bibliographystyle{amsplain}

\begin{thebibliography}{1}

\bibitem{PRiME2021}
Tesfa Asmara, Erik Imathiu-Jones, Maria Maalouf, Isaac Robinson, and Sharon Sneha Spaulding, \emph{{P}rime 2021 {G}it{H}ub {R}epository}, 2021. \url{https://github.com/PRiME-2021/Algorithms}

\bibitem{sagemath}
The~Sage Developers, \emph{{S}agemath, the {S}age {M}athematics {S}oftware {S}ystem ({V}ersion 9.1), 2020.} \url{http://www.sagemath.org}

\bibitem{MR2895884}
Ernesto Girondo and Gabino Gonz\'alez-Diez, \emph{Introduction to {C}ompact {R}iemann {S}urfaces and {D}essins d'{E}nfants}, London Mathematical Society Student Texts, vol.~79, Cambridge University Press, Cambridge, 2012.  \MR{2895884}

\bibitem{lmfdb}
The {LMFDB Collaboration}, \emph{The l-functions and modular forms database}, 2013. \url{http://www.lmfdb.org}

\bibitem{MR2514094}
Joseph~H. Silverman, \emph{The {A}rithmetic of {E}lliptic {C}urves}, second ed., Graduate Texts in Mathematics, vol. 106, Springer, Dordrecht, 2009. \MR{2514094}

\bibitem{MR93g:11003}
Joseph~H.~Silverman and John~T. Tate, \emph{{Rational {P}oints on {E}lliptic {C}urves}}, Undergraduate Texts in Mathematics, 1992.

\end{thebibliography}


\begin{landscape} \begin{table}[!htbp]
\centering
\begin{tabular}{c|cccc} 
{\texttt{LMFDB} Label} & {Elliptic Curve $E$} & {Toroidal \Belyi $\beta(x,y)$} & {\Belyi $\gamma(z)$} & {Meromorphic $\phi(x,y)$} \\[8pt] \hline & & & & \\[4pt]
{\href{https://beta.lmfdb.org/Belyi/4T1/4/4/2.2/a}{4T1-4\_4\_2.2-a}} & {\href{https://beta.lmfdb.org/EllipticCurve/Q/32/a/3}{$y^2 = x^3 - x$}} & {$1-x^2$} & \href{https://beta.lmfdb.org/Belyi/2T1/2/2/1.1/a}{$4 \, z \, (1-z)$} & {$\dfrac {x+1}{2}$} \\[10pt]
{\href{https://beta.lmfdb.org/Belyi/6T1/6/2.2.2/3.3/a}{6T1-6\_2.2.2\_3.3-a}} & {\href{https://beta.lmfdb.org/EllipticCurve/Q/36/a/4}{$y^2 = x^3 + 1$}} & {$-x^3$} & \href{https://beta.lmfdb.org/Belyi/2T1/2/2/1.1/a}{$4 \, z \, (1-z)$} & {$\dfrac {1-y}{2}$} \\[10pt]
{\href{https://beta.lmfdb.org/Belyi/6T5/6/6/3.1.1.1/a}{6T5-6\_6\_3.1.1.1-a}} & {\href{https://beta.lmfdb.org/EllipticCurve/Q/36/a/4}{$y^2 = x^3 + 1$}} & {$\dfrac {(1-y) \, (3+y)}{4}$} & \href{https://beta.lmfdb.org/Belyi/2T1/2/2/1.1/a}{$4 \, z \, (1-z)$} & {$\dfrac {3+y}{4}$} \\[10pt]
{\href{https://beta.lmfdb.org/Belyi/6T6/6/6/2.2.1.1/a}{6T6-6\_6\_2.2.1.1-a}} & {\href{https://beta.lmfdb.org/EllipticCurve/Q/72/a/5}{$y^2 = x^3 + 6 \, x - 7$}} & {$\dfrac {(x-1)^3}{27}$} & \href{https://beta.lmfdb.org/Belyi/3T1/3/3/1.1.1/a}{$z^3$} & {$\dfrac {x-1}{3}$} \\[10pt]
{\href{https://beta.lmfdb.org/Belyi/6T7/4.2/4.2/3.3/a}{6T7-4.2\_4.2\_3.3-a}} & {\href{https://beta.lmfdb.org/EllipticCurve/Q/7056/q/3}{$y^2 = x^3 - 10731 \, x + 408170$}} & {$\dfrac {11907 \, (x-49)}{(x-7)^3}$} & \href{https://beta.lmfdb.org/Belyi/3T2/3/2.1/2.1/a}{$z^2 \, (3 - 2 \, z)$} & {$\dfrac {63}{x-7}$} \\[10pt]
{\href{https://beta.lmfdb.org/Belyi/8T2/4.4/4.4/2.2.2.2/a}{8T2-4.4\_4.4\_2.2.2.2-a}} & {\href{https://beta.lmfdb.org/EllipticCurve/Q/64/a/4}{$y^2 = x^3 + x$}} & {$\dfrac{(x+1)^4}{8 \, x \, (x^2 + 1)}$} & \href{https://beta.lmfdb.org/Belyi/4T2/2.2/2.2/2.2/a}{$\dfrac {(z^2 + 1)^2}{4 \, z^2}$} & {$\dfrac {\sqrt{2} \, x}{y}$} \\[10pt]
{\href{https://beta.lmfdb.org/Belyi/8T7/8/8/2.2.1.1.1.1/a}{8T7-8\_8\_2.2.1.1.1.1-a}} & {\href{https://beta.lmfdb.org/EllipticCurve/Q/32/a/3}{$y^2 = x^3 - x$}} & {$x^4$} & \href{https://beta.lmfdb.org/Belyi/4T1/4/4/1.1.1.1/a}{$z^4$} & {$x$} \\[10pt]
\hline \hline \\[5pt]
{\texttt{LMFDB} Label} & {Elliptic Curve $E$} & {Toroidal \Belyi $\beta(x,y)$} & {\Belyi $\phi(x,y)$} & {Isogeny $\psi(x,y)$} \\[8pt] \hline & & & & \\[0pt]
{\href{https://beta.lmfdb.org/Belyi/6T4/3.3/3.3/3.3/a}{6T4-3.3\_3.3\_3.3-a}} & {\href{https://beta.lmfdb.org/EllipticCurve/Q/36/a/2}{$y^2 = x^3 - 15 \, x + 22$}} & {$\dfrac {\begin{aligned} 8 & \, (x-2)^2 \\ & - (x^2-4 \, x+ 7) \, y \end{aligned}}{16 \, (x-2)^2}$} & {$\dfrac {1-y}{2}$} & {$\left( \dfrac {x^2 - 2 \, x - 3}{4 \, (x-2)}, \ \dfrac {(x^2 - 4 \, x + 7) \, y}{8 \, (x-2)^2} \right)$} \\[10pt]
\end{tabular}

\vskip 0.5in
\caption{Toroidal \Belyi Pairs $(E, \beta)$ from Table \ref{table:examples} which are Imprimitive: $\beta = \gamma \circ \phi \circ \psi$}
\label{table:imprimitive}
\end{table} \end{landscape}

\end{document}